\newtheorem{defi}{Definition}[section]
\newtheorem{theorem}[defi]{Theorem}
\newtheorem{lema}[defi]{Lemma}
\newtheorem{prop}[defi]{Proposition}
\newtheorem{cor}[defi]{Corollary}
\newtheorem{remark}[defi]{Remark}
\def\diag{\mathop\mathrm{diag}\nolimits}
\def\tr{\mathop\mathrm{tr}}
\def\mat{{\rm Sym}}
\def\abs#1{\left| #1\right|}
\def\s{\mathbb{S}}
\def\sfe{\mathbb{S}^{n-1}}
\def\R{\mathbb{R}}
\def\N{\mathbb{N}}
\def\K{\mathcal{K}}
\def\c{C}
\def\cf{\mathfrak{C}}
\def\H{\mathcal{H}}
\begin{document}
\title[Monotonicity and concavity of integral functionals]{Monotonicity and 
concavity of integral functionals \\
involving area measures of convex bodies}
\author[Andrea Colesanti, Daniel Hug and Eugenia Saor\'{\i}n G\'omez]{Andrea Colesanti,
Daniel Hug and Eugenia Saor\'{\i}n G\'omez}
\address{Dipartimento di Matematica e Informatica ``U. Dini",
Viale Morgagni 67/A, 50134 Firenze, Italy}
 \email{colesant@math.unifi.it}
\address{Karlsruhe Institute of Technology (KIT), Department of Mathematics,
D-76128 Karlsruhe, Germany} \email{daniel.hug@kit.edu}
\address{Fakult\"at f\"ur Mathematik, Otto-von-Guericke Universit\"at Magdeburg, \newline
Universit\"atsplatz 2, D-39106 Magdeburg, Germany}
\email{eugenia.saorin@ovgu.de}
\subjclass[2010]{Primary: 52A20; Secondary: 26D15 49Q20 52A39 52A40}   
\keywords{Convex bodies; Brunn-Minkowski inequality; area measure; monotonic functional.}
\date{\today}
\begin{abstract} For a broad class of integral functionals defined on the space of $n$-dimensional convex bodies, 
we establish  necessary and sufficient conditions  for monotonicity, and necessary conditions for the validity
of a Brunn-Minkowski type inequality.  In particular, we prove that a Brunn-Minkowski type inequality implies monotonicity, and that
a general Brunn-Minkowski type inequality is equivalent to the functional being a mixed volume.
\end{abstract}
\maketitle

\noindent

\section{Introduction}\label{intro}

For a broad class of homogeneous functionals $\bf F$ defined on $\K^n$, the space  of {\em convex bodies}
(non-empty compact convex sets) in $\R^n$, a Brunn-Minkowski type inequality of the following form holds true,
\begin{equation}\label{BM-0}
{\bf F}((1-t)K+tL)^{1/\alpha}\ge(1-t){\bf F}(K)^{1/\alpha}+t
{\bf F}(L)^{1/\alpha}
\end{equation}
for all $K,L\in\K^n$ and  $t\in[0,1]$, 
where $(1-t)K+tL$ is a Minkowski combination of $K$ and $L$, and $\alpha$ is the degree of homogeneity of ${\bf F}$. 
In other words, condition \eqref{BM-0} states that $\bf F^{1/\alpha}$ is concave on $\K^n$. 
The archetype of these inequalities is the classical Brunn-Minkowski inequality, in which $\bf F$ is the
$n$-dimensional volume functional (Lebesgue measure) and $\alpha=n$. This inequality is one of the
cornerstones of convex geometry  and connects this subject to
many other areas of mathematics. The interested reader is referred to the survey paper \cite{Gardner} by Gardner. Other important examples come
from the realm of convex geometry itself (intrinsic volumes, mixed volumes and many others) or from analysis (e.g., eigenvalues of elliptic operators,
various notions of capacities); see for instance  \cite{Colesanti05} and \cite{Schneider}.

\medskip

In many remarkable cases, a functional ${\bf F}$ which satisfies a Brunn-Minkowski type inequality is accompanied by other significant properties like continuity,
additivity, and monotonicity with respect to set inclusion. One of the purposes of this paper is to investigate the interplay between a 
Brunn-Minkowski type inequality and monotonicity for some integral functionals involving {\em area measures} of convex bodies (see
Section \ref{sec-preliminaries} for definitions and references). 
For a continuous function $f$ defined on the unit sphere $\sfe$ of $\R^n$ and an integer $i\in\{1,\dots,n-1\}$, we define
\begin{equation}\label{intro.1}
K\,\mapsto\,{\bf F}(K):=\int_{\sfe} f(u) \, S_i(K;du),
\end{equation}
where $S_i(K;\cdot)$ denotes the $i$th {\it area measure} of $K$. 
By the properties of area measures (\cite[Section 5.1]{Schneider}) and the continuity of $f$, the functional $\bf
F$ is continuous with respect to the Hausdorff metric, translation invariant and  homogeneous of degree $i$.

In the particular case where $f$ is the {\em support function} of some fixed convex body $L$, the functional 
 $\bf F$ is in fact a {\it mixed volume} and has two additional
interesting properties. The first is monotonicity with respect to set inclusion, which means that for all $K,L\in\K^n$,
\begin{equation}\label{monotonicity}
K\subset L\,\Rightarrow\,{\bf F}(K)\le{\bf F}(L).
\end{equation}
Second, ${\bf F}$ satisfies a Brunn-Minkowski type inequality \eqref{BM-0} with $\alpha=i$, that is, 
\begin{equation}\label{BM}
{\bf F}((1-t)K+tL)^{1/i} \ge
(1-t){\bf F}(K)^{1/i}+t{\bf F}(L)^{1/i},
\end{equation}
for all 
$K,L\in\K^n$ and  $t\in[0,1]$. Since mixed volumes are non-negative, the $i$th root is well defined.

For general $f$, other than a support function, we cannot expect  $\bf F$ to satisfy 
either \eqref{monotonicity} or \eqref{BM}. Let us examine the case $i=n-1$.
In \cite{McMullen90}, McMullen  proved that, in this case, \eqref{monotonicity}
implies that $f$ is a support function. A corresponding result for the Brunn-Minkowski inequality
has been recently established in \cite{Colesanti-Hug-Saorin}. There it is shown that \eqref{BM} implies
that $f$ is a support function. Hence, for $i=n-1$, both \eqref{monotonicity} and
\eqref{BM} are  equivalent to the fact that $\bf F$ is a mixed volume,
and therefore they are equivalent to each other.

These equivalences are no longer true for $i<n-1$. For instance when
$i=1$, the functional $\bf F$ is linear with respect to the Minkowski addition,
and, in particular, it satisfies \eqref{BM} for every choice of
$f$. On the other hand, as we will see in Theorem 
\ref{thm-mon-1}, ${\bf F}$ is not monotonic for every $f$.

\medskip

In the first part of this paper we find a condition on $f$ which is equivalent to monotonicity of $\bf F$. 
We first present this condition in the
smooth case, that is, for $f\in C^2(\sfe)$. We need to introduce some notation. 
For $u\in\sfe$, we define the $(n-1)\times(n-1)$ matrix
$$
Q(f,u):=(f_{ij}(u)+f(u)\delta_{ij})_{i,j=1}^{n-1},
$$
where $f_{ij}$ are the second covariant derivatives of $f$ with respect to an orthonormal
frame on $\sfe$ and $\delta_{ij}$ are the usual Kronecker symbols. Hence, 
$Q(f,u)$ is the spherical Hessian matrix of $f$ at $u$ plus $f(u)$ times the identity
matrix (see Section \ref{sec-preliminaries} again for details). This is a symmetric matrix, and we will denote by
$\lambda_i(u)$, $i=1,\dots,n-1$, its eigenvalues. Note that
if $\bar f$ denotes the 1-homogeneous extension of $f$ to $\R^n$ and
$x\in\R^n\setminus\{0\}$, then the set of eigenvalues of the Euclidean Hessian
matrix of $\bar f$ at $x$, denoted by $D^2\bar f(x)$, is $\{\lambda_1(u),\dots,\lambda_{n-1}(u),0\}$,
where $u=x/\|x\|\in\sfe$. In particular,  the convexity of $\bar f$ is equivalent to the fact that
$Q(f,u)$ is positive semi-definite for every $u$ (see \cite[Appendix]{Colesanti-Hug-Saorin}).

To state our main results we need the following definition. 

\begin{defi} Let $f\in C^2(\sfe)$ and $i\in\{1,\dots,n-1\}$. We say that $f$ satisfies condition $({\bf M})_i$
if for every $u\in\sfe$ and $I\subset\{1,\ldots,n-1\}$ with $|I|=n-i$, we have
\begin{equation}\label{condition-M-i}
\sum_{i\in I} \lambda_i(u)\ge 0,
\end{equation}
where $|I|$ denotes the cardinality of $I$.
\end{defi}

In other words, for any choice of $(n-i)$  eigenvalues of $Q(f,u)$, their sum is non-negative.
Note that if $f$ satisfies $({\bf M})_i$, for some $i$, then it also satisfies
$({\bf M})_j$ for every $j\le i$. The following result asserts
that condition $({\bf M})_i$ is equivalent to monotonicity of $\bf F$.

\begin{theorem}\label{thm-mon-1} Let $f\in C^2(\sfe)$ and $i\in\{1,\dots,n-1\}$. Then
the functional $\bf F$ defined by  \eqref{intro.1} satisfies \eqref{monotonicity} if and only if
$f$ satisfies condition $({\bf M})_i$.
\end{theorem}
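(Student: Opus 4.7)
My plan is to characterize monotonicity through a pointwise matrix inequality on $Q(f,u)$, obtained by differentiating ${\bf F}$ along a Minkowski interpolation and then performing an integration by parts, and to recognize that pointwise inequality as condition $({\bf M})_i$ via a linear-algebra computation involving mixed discriminants.

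First I reduce to the case in which $K,M$ are both of class $C^2_+$: by continuity of ${\bf F}$ on $\K^n$ and density of $C^2_+$ bodies, it suffices to verify the implication $(\Leftarrow)$ for smooth strictly convex pairs (with a little care so that the inner/outer smoothing preserves the inclusion $K\subset M$), while $(\Rightarrow)$ can be tested directly on such pairs. In this regime $S_i(K;du)=s_i(Q(h_K,u))\,d\sigma(u)$, with $s_i$ the $i$-th elementary symmetric function of the eigenvalues. Setting $h_t=(1-t)h_K+th_M$ and $g=h_M-h_K\ge 0$, the linearity of $Q$ in its scalar argument and the mixed-discriminant derivative of $s_i$ give
\begin{equation*}
{\bf F}(M)-{\bf F}(K)=i\binom{n-1}{i}\int_0^1\!\int_{\sfe}f(u)\,D\bigl(Q(g,u),Q(h_t,u)[i-1],I[n-1-i]\bigr)\,d\sigma(u)\,dt,
\end{equation*}
where $D$ denotes the mixed discriminant of $n-1$ symmetric matrices.

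The crucial step is to transfer the operator $Q$ from $g$ onto $f$. This rests on the classical symmetry of the mixed volume,
\begin{equation*}
V(K_0,\ldots,K_{n-1})=\frac{1}{n}\int_{\sfe} h_{K_0}(u)\,D\bigl(Q(h_{K_1},u),\ldots,Q(h_{K_{n-1}},u)\bigr)\,d\sigma(u),
\end{equation*}
which, combined with the fact that every $C^2$ function on $\sfe$ is a difference of two $C^2_+$ support functions (add a large multiple of $1$), extends by multilinearity to the self-adjointness identity
\begin{equation*}
\int_{\sfe} f\,D\bigl(Q(g),Q(h_t)[i-1],I[n-1-i]\bigr)\,d\sigma=\int_{\sfe} g\,D\bigl(Q(f),Q(h_t)[i-1],I[n-1-i]\bigr)\,d\sigma.
\end{equation*}
Because $g\ge 0$ is otherwise arbitrary and $Q(h_K,u)$ at any fixed $u$ can be prescribed to be any positive semidefinite matrix by a suitable choice of $K$, monotonicity of ${\bf F}$ becomes equivalent to the pointwise inequality
\begin{equation*}
D\bigl(Q(f,u),A[i-1],I[n-1-i]\bigr)\ge 0 \qquad \text{for every } u\in\sfe \text{ and every positive semidefinite } A.
\end{equation*}

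Matching this with $({\bf M})_i$ is a short linear-algebra computation. Diagonalising $A=\diag(\mu_1,\ldots,\mu_{n-1})$ in an orthonormal basis and writing $B=Q(f,u)$ with diagonal entries $B_{jj}$ in that basis, one finds
\begin{equation*}
D\bigl(B,A[i-1],I[n-1-i]\bigr)=c_{n,i}\sum_{\substack{S\subset\{1,\ldots,n-1\}\\ |S|=i-1}}\biggl(\prod_{k\in S}\mu_k\biggr)\sum_{j\notin S}B_{jj}.
\end{equation*}
The inner sum is the trace of $B$ on an $(n-i)$-dimensional coordinate subspace; by the Ky Fan minimax principle, non-negativity of all such traces (as the basis varies) is equivalent to $({\bf M})_i$. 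Specialising $A$ to a rank-$(i-1)$ orthogonal projection isolates a single such trace and yields the necessity of $({\bf M})_i$, while sufficiency is immediate from the displayed formula since the weights $\prod_{k\in S}\mu_k$ are non-negative. The main obstacle is the integration-by-parts identity above: although its essence is the symmetry of the mixed volume, its rigorous extension from support functions to arbitrary $C^2$ test functions is the technical heart of the argument, and the smoothing that preserves $K\subset M$ is a further minor subtlety.
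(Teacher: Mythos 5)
Your argument follows the same overall strategy as the paper---differentiate $\mathbf{F}$ along a Minkowski segment, integrate by parts to move the second-order operator $Q$ from the direction $\phi=h_M-h_K$ onto $f$, obtain a pointwise matrix inequality by realising every positive (semi)definite matrix as some $Q(h_K,u)$, and finish with elementary linear algebra. The two genuine differences lie in how you justify the two technical steps. For the integration by parts, the paper relies on the Cheng--Yau divergence-free property of the cofactor matrix $\bigl(S_i^{jk}(Q(h))\bigr)$ (Lemma 2.4 and Remark 2.6), whereas you invoke the symmetry of mixed volumes together with the observation that any $C^2$ function on $\sfe$ is a difference of two $C^2_+$ support functions; these are two faces of the same identity (indeed $S_i^{kj}(A)b_{kj}$ is a positive multiple of the mixed discriminant $D\bigl(B,A[i-1],I[n-1-i]\bigr)$), but your route is arguably more conceptual and bypasses the local covariant-derivative computation. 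For the linear-algebra step, the paper diagonalises $M=Q(f,u)$, restricts to $A$ diagonal in that basis, and proves the equivalence with $(\mathbf{M})_i$ via Lemma 3.5; you instead invoke the Ky Fan minimax principle, which has the advantage of making it transparent that the pointwise inequality $\tr\bigl(S_i^{kj}(A)\,Q(f,u)\bigr)\ge 0$ holds for \emph{all} positive semidefinite $A$ (not merely those commuting with $Q(f,u)$) once $(\mathbf{M})_i$ is assumed---a point the paper leaves slightly implicit. Both approaches are correct; yours buys a cleaner closure of the converse direction at the cost of citing Ky Fan.
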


In the case $i=n-1$ condition $({\bf M})_i$ amounts to the fact that each eigenvalue must be non-negative, that is,
$Q(f,u)$ is positive semi-definite everywhere on $\sfe$, and then the 1-homogeneous extension $\bar f$ of $f$ is
convex. But this  in turn is equivalent to saying that $f$ is a support function. Hence we have an alternative proof
of the result of McMullen \cite{McMullen90}, at least in the smooth case, but our procedure extends to the
general case $f\in C(\sfe)$, as the Theorem \ref{thm-mon-2} shows.

In the other limiting case,  $i=1$, condition \eqref{condition-M-i} means that the trace of $Q(f,u)$ is non-negative
for every $u$; equivalently, 
$$
{\rm trace}(D^2\bar f (x))=\Delta\bar f(x)\ge 0\quad\text{for all } x\ne0,
$$
where $\Delta$ denotes the Euclidean Laplace operator, i.e., $\bar f$ is a subharmonic function in $\R^n\setminus\{0\}$.

In general, condition \eqref{condition-M-i} is related to the so-called {\em $r$-convexity of $f$} or, more precisely, of its
1-homogeneous extension. We recall that a function $g\in C^2(\Omega)$, where $\Omega$ is an open subset of
$\R^n$, is said to be $k$-convex, for some $k\in\{1,\dots n\}$, if for every $x\in\Omega$ and for $j=1,\ldots,k$ the  $j$th elementary
symmetric function of the eigenvalues of $D^2 g(x)$ is non-negative. In particular, it can be seen that
$n$-convexity is equivalent to the usual convexity. It is known (see for instance \cite{Salani-PhD}, Prop. 1.3.3) that
if $g$ is $k$-convex, then, for every $x\in\Omega$ and for every choice of $n-k+1$ distinct eigenvalues of $D^2g(x)$, their sum is
non-negative. Hence we have the following corollary.

\begin{cor} Let $i\in\{1,\dots,n-1\}$. Let $f\in C^2(\sfe)$, and let $\bar f$ be its 1-homogeneous extension. If $\bar f$ is
$i$-convex in $\R^n\setminus\{0\}$, then the functional defined by \eqref{intro.1} is monotonic.
\end{cor}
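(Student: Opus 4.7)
The plan is to reduce the corollary to Theorem \ref{thm-mon-1} via the characterization of $r$-convexity mentioned in the paragraph immediately preceding the statement. By Theorem \ref{thm-mon-1}, monotonicity of $\bf F$ is equivalent to condition $({\bf M})_i$ for $f$, so it suffices to show that the $i$-convexity of $\bar f$ on $\R^n\setminus\{0\}$ implies $({\bf M})_i$.

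The key observation, already recorded in the excerpt, is that for each $x\in\R^n\setminus\{0\}$, the spectrum of the Euclidean Hessian $D^2\bar f(x)$ is exactly $\{\lambda_1(u),\dots,\lambda_{n-1}(u),0\}$, where $u=x/\|x\|$ and $\lambda_1(u),\dots,\lambda_{n-1}(u)$ are the eigenvalues of $Q(f,u)$; the extra zero eigenvalue corresponds to the radial direction, along which $\bar f$ is linear. Now assume $\bar f$ is $i$-convex on $\R^n\setminus\{0\}$. By the cited Proposition 1.3.3 of \cite{Salani-PhD}, this implies that at every $x\ne 0$ the sum of any $n-i+1$ eigenvalues of $D^2\bar f(x)$ is non-negative.

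To verify $({\bf M})_i$, fix $u\in\sfe$ and $I\subset\{1,\dots,n-1\}$ with $|I|=n-i$. Then $\{\lambda_j(u):j\in I\}\cup\{0\}$ is a collection of $n-i+1$ eigenvalues of $D^2\bar f(u)$, so by $i$-convexity
\[
\sum_{j\in I}\lambda_j(u)=\sum_{j\in I}\lambda_j(u)+0\ge 0,
\]
which is precisely condition \eqref{condition-M-i}. Hence $f$ satisfies $({\bf M})_i$, and Theorem \ref{thm-mon-1} yields the monotonicity of $\bf F$.

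There is no real obstacle here: once the spectrum of $D^2\bar f$ is identified with that of $Q(f,\cdot)$ augmented by the radial zero, the corollary is a direct bookkeeping consequence of Theorem \ref{thm-mon-1} and the standard fact about sums of eigenvalues for $k$-convex functions. The only point worth being explicit about is the exact matching of cardinalities, namely $n-i+1$ eigenvalues of $D^2\bar f$ versus $n-i$ eigenvalues of $Q(f,u)$, which works out precisely because of the extra zero eigenvalue in the radial direction.
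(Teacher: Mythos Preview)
Your proposal is correct and is precisely the argument the paper intends: the paper does not give a separate proof but simply writes ``Hence we have the following corollary'' after recording the spectrum identification and the fact (Prop.~1.3.3 of \cite{Salani-PhD}) that $k$-convexity forces any $n-k+1$ eigenvalues to have non-negative sum. Your write-up spells out the one bookkeeping step---augmenting the $n-i$ eigenvalues of $Q(f,u)$ by the radial zero to reach $n-i+1$ eigenvalues of $D^2\bar f$---which is exactly what the paper leaves implicit.
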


Theorem \ref{thm-mon-1} is complemented by the following statement concerning the case in which $f$ is just continuous.

\begin{theorem}\label{thm-mon-2} Let $f\in C(\sfe)$ and let $i\in\{1,\dots,n-1\}$. Then
the functional $\bf F$ defined by  \eqref{intro.1} satisfies \eqref{monotonicity}, i.e., it is monotonic w.r.t. set inclusion, if and only if
there exists a sequence $f_k\in C^2(\sfe)$, $k\in\N$, converging to $f$ uniformly on $\sfe$ and such that
$f_k$ satisfies condition $({\bf M})_i$ for every $k\in\N$.
\end{theorem}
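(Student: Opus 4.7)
The plan is to prove both implications using Theorem \ref{thm-mon-1} as a bridge between condition $({\bf M})_i$ and monotonicity of $\bf F$.

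The sufficiency direction is straightforward. If $f_k\in C^2(\sfe)$ satisfies $({\bf M})_i$ and $f_k\to f$ uniformly on $\sfe$, then by Theorem \ref{thm-mon-1} each functional ${\bf F}_k(K):=\int_{\sfe}f_k(u)\,S_i(K;du)$ is monotonic. For any fixed $K\in\K^n$ the total mass $S_i(K;\sfe)$ is finite, so
$$
|{\bf F}_k(K)-{\bf F}(K)|\le \|f_k-f\|_\infty\cdot S_i(K;\sfe)\longrightarrow 0.
$$
Monotonicity is preserved under pointwise limits of functionals, whence $\bf F$ is monotonic.

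For the converse, the idea is to smooth $f$ by \emph{rotational convolution} in such a way that monotonicity of the associated functional is preserved. Let $(\phi_k)_{k\in\N}\subset C^\infty(SO(n))$ be a non-negative approximate identity for the normalized Haar measure $d\rho$, with supports shrinking to the identity rotation. Define
$$
f_k(u):=\int_{SO(n)}f(\rho^{-1}u)\,\phi_k(\rho)\,d\rho,\qquad u\in\sfe.
$$
Writing the smooth transitive $SO(n)$-action on $\sfe$ in local coordinates and using left-invariance of $d\rho$ shows that $f_k\in C^\infty(\sfe)$; uniform continuity of $f$ on the compact sphere gives $\|f_k-f\|_\infty\to 0$. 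Moreover, by Fubini's theorem (all measures involved are finite and all integrands bounded) together with the rotation equivariance of area measures, $S_i(\rho K;\cdot)=\rho_* S_i(K;\cdot)$ (see \cite{Schneider}, Section 5.1), one has $\int_{\sfe}f(\rho^{-1}u)\,S_i(K;du)={\bf F}(\rho^{-1}K)$, and hence
$$
{\bf F}_k(K)=\int_{SO(n)}\phi_k(\rho)\,{\bf F}(\rho^{-1}K)\,d\rho.
$$
If $K\subset L$, then $\rho^{-1}K\subset\rho^{-1}L$ for every $\rho\in SO(n)$, so by monotonicity of $\bf F$ the integrand is pointwise smaller for $K$ than for $L$. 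Therefore ${\bf F}_k(K)\le{\bf F}_k(L)$, and Theorem \ref{thm-mon-1} yields condition $({\bf M})_i$ for each $f_k$.

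The main obstacle lies in the regularity and uniform convergence of the rotationally convolved $f_k$: both are standard consequences of the theory of convolution on compact Lie groups, but they must be executed carefully using the smooth transitive $SO(n)$-action on $\sfe$. Once this is in place, the rotational invariance of area measures is what makes convolution-based smoothing preserve monotonicity, and Theorem \ref{thm-mon-1} finishes the job.
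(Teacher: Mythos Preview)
Your proposal is correct and follows essentially the same strategy as the paper: smooth $f$ by rotational convolution (the paper's Lemma~\ref{l:regularization}), observe that ${\bf F}_k(K)$ is a non-negative average of values ${\bf F}(\rho K)$ so that monotonicity is inherited, and then invoke Theorem~\ref{thm-mon-1}. The only cosmetic difference is that the paper carries out the monotonicity check on $C^2_+$ bodies via the density $S_i(Q(h_K,\cdot))$ and extends by continuity, whereas you use the rotation equivariance $S_i(\rho K;\cdot)=\rho_* S_i(K;\cdot)$ directly for all $K$; your route is slightly cleaner but not genuinely different.
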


\medskip

In Section \ref{sec-BM},
we consider the case in which $\bf F$ satisfies a Brunn-Minkowski type inequality and prove the following theorem.

\begin{theorem}\label{thm-BM} Let $i\in\{2,\dots,n-1\}$ and let $f\in C^2(\sfe)$ be such that the functional $\bf F$ defined by \eqref{intro.1} is non-negative and satisfies
the Brunn-Minkowski type inequality \eqref{BM}. Then $f$ satisfies condition $({\bf M})_i$.
\end{theorem}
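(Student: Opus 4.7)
The plan is to derive condition $(\mathbf{M})_i$ from the Brunn--Minkowski inequality~\eqref{BM} in two stages. First, I will distill from \eqref{BM} an Alexandrov--Fenchel type inequality for integrals of $f$ against mixed area measures. Second, I will localize this inequality at an arbitrary direction $u_0\in\sfe$ to read off a pointwise eigenvalue bound.  Once $(\mathbf{M})_i$ is established, Theorem~\ref{thm-mon-1} yields, as a bonus, that $\mathbf F$ is monotonic with respect to set inclusion.

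\textbf{Stage 1: from \eqref{BM} to an Alexandrov--Fenchel type inequality.}  Fix $C^2$-smooth strictly convex bodies $K$ and $M$, and put
$$\phi(t):=\mathbf F(K+tM)=\sum_{k=0}^{i}\binom{i}{k}t^{k}\,\mathbf F\bigl(K[i-k],M[k]\bigr),\quad \mathbf F\bigl(K[i-k],M[k]\bigr):=\int_{\sfe} f\,dS\bigl(K[i-k],M[k];\cdot\bigr),$$
which is a polynomial of degree $i$ in $t$ by the polynomial expansion of the area measure $S_i(K+tM,\cdot)$. Since $\mathbf F$ is $i$-homogeneous, the Brunn--Minkowski inequality \eqref{BM} is equivalent to concavity of $\phi^{1/i}$ on $[0,\infty)$; imposing $(\phi^{1/i})''(0)\le 0$ and using $\mathbf F(K)>0$ yields the Alexandrov--Fenchel type inequality
$$\mathbf F(K)\cdot\mathbf F\bigl(K[i-2],M[2]\bigr)\;\le\;\mathbf F\bigl(K[i-1],M\bigr)^{2}. \qquad(\ast)$$
The hypothesis $i\ge 2$ is used essentially here: for $i=1$, $\phi$ is affine and the coefficient $\mathbf F(K[i-2],M[2])$ is not even defined; this matches the remark in the introduction that for $i=1$ inequality~\eqref{BM} holds for every $f$.

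\textbf{Stage 2: localization.}  For smooth bodies, every term in $(\ast)$ is an integral of $f$ against a smooth density on $\sfe$ given by a mixed discriminant built from $Q(h_K,\cdot)$ and $Q(h_M,\cdot)$. Fix $u_0\in\sfe$ and $I\subset\{1,\ldots,n-1\}$ with $|I|=n-i$, and pick an orthonormal frame at $u_0$ in which $Q(f,u_0)$ is diagonal. The plan is to construct, for each $\epsilon>0$, smooth strictly convex bodies $K_\epsilon,M_\epsilon$ whose matrices $Q(h_{K_\epsilon},\cdot)$ and $Q(h_{M_\epsilon},\cdot)$, in the chosen frame, isolate the directions indexed by $I$ at $u_0$ --- concretely, $Q(h_{K_\epsilon},u_0)$ is a smoothing of the identity on the coordinates outside $I$ plus $\epsilon$ times the identity on those in $I$, while $M_\epsilon$ is a smooth convex bump supported in a shrinking spherical neighborhood of $u_0$ whose matrix $Q(h_{M_\epsilon},u_0)$ approximates the orthogonal projector onto the span of the coordinate axes labeled by $I$. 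Expanding the three integrals of $(\ast)$ in powers of $\epsilon$ and letting the support of $h_{M_\epsilon}$ shrink to $\{u_0\}$, the leading-order part of $(\ast)$ reduces, after normalization, to the pointwise inequality $\sum_{j\in I}\lambda_j(u_0)\ge 0$. Since $u_0$ and $I$ are arbitrary, $(\mathbf{M})_i$ holds in full.

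\textbf{Main obstacle.}  The principal difficulty will be the explicit construction and asymptotic analysis of the family $(K_\epsilon,M_\epsilon)$: both must be \emph{genuine} smooth convex bodies, so $Q(h_{K_\epsilon})$ and $Q(h_{M_\epsilon})$ have to be positive semidefinite on all of $\sfe$, not only at $u_0$; the leading order of each of the three mixed-discriminant integrals in $(\ast)$ must be identifiable as an explicit expression in $\lambda_j(u_0)$, $j\in I$, and $f(u_0)$; and one must check that subleading contributions do not pollute the conclusion. The non-negativity assumption $\mathbf F\ge 0$ is essential at both stages: it ensures that $\phi^{1/i}$ is well defined along the entire Minkowski path used in Stage~1, and it preserves the direction of $(\ast)$ after the limits of Stage~2.
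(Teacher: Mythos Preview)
Your Stage~1 is essentially the paper's opening move: both differentiate $\mathbf F$ twice along a one-parameter family and use concavity of the $1/i$-th power to obtain a quadratic inequality. The crucial difference is in the admissible perturbations. You restrict to Minkowski paths $K+tM$, so the perturbing direction is a support function $\phi=h_M$. The paper instead perturbs $h=h_K\in\cf$ by an \emph{arbitrary} $\phi\in C^\infty(\sfe)$: since $Q(h,\cdot)>0$, one has $h+s\phi\in\cf$ for all small $|s|$, hence $K_s$ is a genuine $C^2_+$ body even though $\phi$ itself need not be a support function. This larger class of test directions is exactly what the localization step requires.

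The gap in your Stage~2 is that the family $(K_\epsilon,M_\epsilon)$ you describe does not exist. You ask for $M_\epsilon$ with ``the support of $h_{M_\epsilon}$ shrink[ing] to $\{u_0\}$'', but a nonzero support function cannot be supported in a spherical cap of angular radius less than $\pi/2$: if $h_{M_\epsilon}(u)=0$ for all $u$ outside such a cap, intersecting the half-spaces $\{\langle\cdot,u\rangle\le 0\}$ forces $M_\epsilon=\{0\}$. So no convex body furnishes a compactly supported bump, and your asymptotic analysis has no input; even under more charitable readings (curvature concentration, degenerating $K_\epsilon$) you would have to control three global integrals and the multiplicative factor $\mathbf F(K_\epsilon)$ simultaneously, and none of this is supplied. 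The paper proceeds differently. With general $\phi$, it integrates by parts via the Cheng--Yau divergence identities (Lemmas~\ref{Cheng-Yau} and~\ref{Cheng-Yau-ext}) to rewrite the second-order inequality as \eqref{III.15bis}, a quadratic inequality in $(\phi,\nabla\phi)$ with continuous coefficient matrix $M(u)=\bigl(S_i^{kj,rs}(Q(h,u))\,q_{kj}(f,u)\bigr)_{r,s}$. It then inserts test functions $\phi_\epsilon$ that are compactly supported and rapidly oscillating (a sawtooth in one tangential coordinate); as $\epsilon\searrow0$ the $L^2$ term and the squared linear term vanish while the gradient term survives, forcing $M(u)\ge0$ pointwise (Lemma~\ref{technical-lemma}). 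Contracting this against $Q(h,u)\ge0$ and using homogeneity of $S_i$ yields $S_i^{kj}(Q(h,u))\,q_{kj}(f,u)\ge0$ for every $h\in\cf$, which by Lemma~\ref{lemmaII.1} and Lemma~\ref{lemma-equiv} is exactly $(\mathbf M)_i$. The oscillatory $\phi_\epsilon$ used here are emphatically not support functions; this is why restricting Stage~1 to Minkowski perturbations is too weak to close the argument.
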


Theorem \ref{thm-BM} provides a {\em necessary} condition on $f$ so that  $\bf F$  satisfies \eqref{BM}. However we do not know whether
this condition is sufficient as well, apart from the case $i=n-1$ in which the answer is affirmative, as proved in \cite{Colesanti-Hug-Saorin}.
Theorem \ref{thm-BM} has the following corollary.

\begin{cor}\label{cor. BM implies mon.} Let $i\in\{2,\dots,n-1\}$ and let $f\in C^2(\sfe)$ be such that the functional $\bf F$ defined by \eqref{intro.1} is non-negative and satisfies
the Brunn-Minkowski inequality \eqref{BM}. Then $\bf F$ is monotonic.
\end{cor}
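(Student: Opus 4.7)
The plan is to deduce this corollary by chaining together the two main theorems already proved in this section. The statement asserts monotonicity of $\bf F$ under the hypotheses of Theorem \ref{thm-BM}, and both theorems are stated for the same class of smooth functions $f\in C^2(\sfe)$, with condition $({\bf M})_i$ as the intermediary. So the natural strategy is to let $({\bf M})_i$ be the bridge.

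First, I would invoke Theorem \ref{thm-BM}: the hypotheses of the corollary match its assumptions verbatim (namely $i\in\{2,\dots,n-1\}$, $f\in C^2(\sfe)$, ${\bf F}$ non-negative and satisfying \eqref{BM}), so its conclusion gives at once that $f$ satisfies condition $({\bf M})_i$. Second, I would feed this into the ``if'' direction of Theorem \ref{thm-mon-1}: since $f\in C^2(\sfe)$ satisfies $({\bf M})_i$, that theorem tells us that the functional $\bf F$ defined by \eqref{intro.1} satisfies \eqref{monotonicity}, i.e., $\bf F$ is monotonic with respect to set inclusion. This is exactly the conclusion of the corollary.

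In other words, the proof is essentially a one-line composition, ``Brunn--Minkowski $\Rightarrow$ $({\bf M})_i$ $\Rightarrow$ monotonicity'', and there is no genuine obstacle beyond verifying that the hypotheses of the two theorems line up, which they do. All the substantive work is done upstream in Theorems \ref{thm-BM} and \ref{thm-mon-1}; the corollary merely records the pleasant fact that, for smooth $f$ and $i\geq 2$, a Brunn--Minkowski type inequality for $\bf F$ is already strong enough to force monotonicity, even though in the full range $i<n-1$ neither property alone characterizes $\bf F$ as a mixed volume.
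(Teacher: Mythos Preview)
Your proposal is correct and matches the paper's approach exactly: the corollary is stated without proof as an immediate consequence of Theorem~\ref{thm-BM}, and the only way to extract monotonicity from condition $({\bf M})_i$ is via Theorem~\ref{thm-mon-1}. The chain ``\eqref{BM} $\Rightarrow$ $({\bf M})_i$ $\Rightarrow$ monotonicity'' is precisely the intended argument.
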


In the case where $f$ is an even function in the sense that $f(-u)=f(u)$ for every $u\in\sfe$, and continuous, 
we have the following extension of Theorem \ref{thm-BM}  and Corollary \ref{cor. BM implies mon.}
(in the spirit of Theorem \ref{thm-mon-2}).

\begin{theorem}\label{thm-BM-sym} Let $i\in\{2,\dots,n-1\}$, let $f\in C(\sfe)$ be even, and let
${\bf F}$ be defined as in \eqref{intro.1}. If $\bf F$ is non-negative and satisfies inequality \eqref{BM}, then there exists a sequence of functions $f_k\in C^2(\sfe)$,
$k\in\N$, which converges uniformly to $f$ on $\sfe$ such that $f_k$ satisfies condition $({\bf M})_i$ for every $k\in\N$.
In particular, $\bf F$ is monotonic.
\end{theorem}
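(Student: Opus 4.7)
The plan is to use spherical mollification to construct the required sequence $(f_k)$, and to verify the hypotheses of Theorem~\ref{thm-BM} for each $f_k$. Fix a sequence $\{\phi_k\}\subset C^\infty(SO(n))$ of non-negative probability densities (with respect to the Haar measure $d\rho$), with $\phi_k(\rho)=\phi_k(\rho^{-1})$ and with supports shrinking to the identity, and define
\[
f_k(u)\,:=\,\int_{SO(n)} f(\rho u)\,\phi_k(\rho)\,d\rho,\qquad u\in\sfe.
\]
Then $f_k\in C^\infty(\sfe)$, $f_k\to f$ uniformly on $\sfe$, and $f_k$ is even (since $f$ is even and $\phi_k$ is symmetric). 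By Fubini together with the rotational covariance $S_i(\rho K,B)=S_i(K,\rho^{-1}B)$ of the $i$-th area measure, one obtains the basic identity
\[
{\bf F}_{f_k}(K)\,=\,\int_{SO(n)}\phi_k(\rho)\,{\bf F}(\rho K)\,d\rho\qquad(K\in\K^n),
\]
which gives ${\bf F}_{f_k}\ge 0$ immediately.

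The crux of the argument is to show that ${\bf F}_{f_k}$ also inherits the Brunn--Minkowski inequality~\eqref{BM} from ${\bf F}$. For each fixed $\rho\in SO(n)$, the map $K\mapsto{\bf F}(\rho K)^{1/i}$ is concave on $\K^n$ as the composition of the concave functional ${\bf F}^{1/i}$ with the affine map $K\mapsto\rho K$; however, the passage to the integral $K\mapsto\bigl(\int\phi_k(\rho){\bf F}(\rho K)\,d\rho\bigr)^{1/i}$ is not automatically concave for $i\ge 2$, since integrating the $i$-th powers of a family of $1/i$-concave functions and extracting the $1/i$-root does not preserve concavity (Minkowski's integral inequality points in the opposite direction). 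The evenness of $f$ enters essentially here: the invariance ${\bf F}(-K)={\bf F}(K)$, combined with the symmetry of $\phi_k$, furnishes a reflection-symmetry of the integrand that allows one to pair rotations appropriately and recombine the pointwise Brunn--Minkowski inequalities so as to recover the $1/i$-concavity of $K\mapsto{\bf F}_{f_k}(K)^{1/i}$. This symmetrization step is the main technical obstacle.

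Granted the BM inequality for ${\bf F}_{f_k}$, Theorem~\ref{thm-BM} applied to $f_k\in C^2(\sfe)$ yields condition $({\bf M})_i$ for every $k$, providing the sequence required in the statement. For the \emph{in particular} clause, Theorem~\ref{thm-mon-1} gives monotonicity of each ${\bf F}_{f_k}$; the elementary estimate $|{\bf F}(M)-{\bf F}_{f_k}(M)|\le\|f-f_k\|_\infty\,S_i(M,\sfe)\to 0$ for each $M\in\K^n$ then allows one to pass to the limit in the inequality ${\bf F}_{f_k}(K)\le{\bf F}_{f_k}(L)$ for $K\subset L$, yielding ${\bf F}(K)\le{\bf F}(L)$.
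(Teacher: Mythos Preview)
Your proposal has a genuine gap at the step you yourself flag as ``the main technical obstacle'': you assert that the evenness of $f$ allows one to ``pair rotations appropriately and recombine the pointwise Brunn--Minkowski inequalities'' so that ${\bf F}_{f_k}$ inherits \eqref{BM}, but you give no mechanism for this, and I do not see one. The symmetry you extract from evenness is ${\bf F}(-K)={\bf F}(K)$, i.e., invariance of the integrand $\rho\mapsto{\bf F}(\rho K)$ under $\rho\mapsto(-\mathrm{id})\circ\rho$. This does nothing to address the basic obstruction you correctly identify: an average $\int\phi_k(\rho)\,g_\rho\,d\rho$ of non-negative $1/i$-concave functions $g_\rho$ is in general not $1/i$-concave for $i\ge2$. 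Pairing $\rho$ with $(-\mathrm{id})\circ\rho$ merely halves the index set without changing the structure of the average. Without this step, Theorem~\ref{thm-BM} cannot be invoked for $f_k$, and the argument collapses.

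The paper avoids this problem entirely by \emph{not} trying to transfer the Brunn--Minkowski inequality to the mollified functionals. Instead, it first extracts from \eqref{BM} its second-order infinitesimal consequence (inequality \eqref{III.30}), which does not require $f\in C^2$. The evenness of $f$ is then used at this differential level: choosing $h$ even and the test function $\phi$ \emph{odd} makes the right-hand side of \eqref{III.30} vanish (it becomes the integral of an odd function), yielding $\int_{\sfe} f\,S_i^{kj,rs}(Q(h))\,q_{kj}(\phi)\,q_{rs}(\phi)\,d\H^{n-1}\le 0$. Restricting to $\phi$ supported in a hemisphere and odd-extending, one obtains an inequality that is \emph{linear in $f$}; linearity is precisely what makes it pass to the mollified $f_k$ by averaging over $\mathbf{O}(n)$. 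Since the test functions in Lemma~\ref{technical-lemma} are hemisphere-supported, that lemma applies to $f_k$, and the pointwise matrix condition and $({\bf M})_i$ follow for each $f_k$ as in the smooth case. The monotonicity of ${\bf F}$ then follows from Theorem~\ref{thm-mon-2} (or by your limiting argument, which is fine).
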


In Section \ref{sec-BM} we will see that the previous result also holds when the symmetry assumption on $f$ is replaced by
the existence of second weak derivatives in the sense of Sobolev spaces.

\medskip

Functionals defined by means of \eqref{intro.1} can be seen as examples of more general integral functionals. Given $K_1,\dots,K_{n-1}\in\K^n$, let
$S(K_1,\dots,K_{n-1};\cdot)$ be their mixed area measure (see Section \ref{sec-preliminaries} for precise definitions). If $f\in C(\sfe)$ and
$i\in\{1,\dots,n-1\}$, let the functional ${\bf F}:\K^n \longrightarrow \R$ be defined by
\begin{equation}\label{F mixed area measure}
{\bf F}(K)=\int_{S^{n-1}} f(u)\, S(K[i],K_1,\dots,K_{n-i-1};du).
\end{equation}
The functional in \eqref{intro.1} is recovered from \eqref{F mixed area measure} in the special case where $K_1,\dots,K_{n-i-1}$ coincide with the Euclidean unit ball. If $f$ is the support function of some
convex body $L$, then $\bf F$ equals the mixed volume
$$
V(L,K[i],K_1,\dots,K_{n-i-1}).
$$
In this case, ${\bf F}$ satisfies a Brunn-Minkowski type inequality for any choice of $L,K_1,\dots,K_{n-i-1}\in\K^n$. This result is called 
{\em general Brunn-Minkowski inequality} (see \cite[Theorem 6.4.3]{Schneider}). In the last section of this paper we
prove that this property characterizes support functions.

\begin{theorem}\label{thm-general-BM}
Let $f\in C(S^{n-1})$ and $i\in\{2,\dots,n-1\}$  be such that for
any choice of convex bodies $K_1,\dots,K_{n-i-1}\in\K^n$ the functional ${\bf F}:\K^n
\longrightarrow \R$ defined by \eqref{F mixed area measure} is non-negative and satisfies
\begin{equation}\label{WBM}
{\bf F}((1-t)K+tL)^{1/i}\geq
(1-t){\bf F}(K)^{1/i}+t\,{\bf F}(L)^{1/i}
\end{equation}
for all $ t\in[0,1]$ and $ K,L\in\K^n$. 
Then $f$ is the support function of a convex body.
\end{theorem}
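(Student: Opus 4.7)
The strategy is to reduce the theorem to the case $i=n-1$, which is known from \cite{Colesanti-Hug-Saorin}: for any continuous $f$ on $\sfe$, if the functional $\int f\,dS_{n-1}(K;\cdot)$ is non-negative and satisfies the Brunn--Minkowski type inequality \eqref{BM}, then $f$ is a support function. When $i=n-1$ the family $K_1,\dots,K_{n-i-1}$ is empty and the theorem is exactly this result; the remaining task is to handle $2\le i\le n-2$, for which the plan is to show that the $1$-homogeneous extension $\bar f$ of $f$ is convex on every $(i+1)$-dimensional linear subspace $E$ of $\R^n$, which forces convexity of $\bar f$ on all of $\R^n$.

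Fixing such a subspace $E$ together with an orthonormal basis $e_1,\dots,e_{n-i-1}$ of $E^\perp$, I would specialize the free bodies to the unit segments $K_j=[0,e_j]$. Iterated application of the classical segment-projection formula
\[
V(M_1,\dots,M_{n-1},[0,a])=\frac{|a|}{n}\,V_{n-1}(M_1|a^\perp,\dots,M_{n-1}|a^\perp)
\]
then yields, for every $K\in\K^n$ with $K\subset E$ and every $M\in\K^n$,
\[
n\,V(M,K[i],K_1,\dots,K_{n-i-1})=\frac{i!}{(n-1)!}\int_{\sfe\cap E} h_M(u)\,S_i^E(K;du),
\]
where $S_i^E(K;\cdot)$ denotes the surface area measure of $K$ regarded as a convex body in $E\cong\R^{i+1}$. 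Since differences of support functions are dense in $C(\sfe)$, this upgrades to the equality of measures
\[
S(K[i],K_1,\dots,K_{n-i-1};\cdot)=\frac{i!}{(n-1)!}\,S_i^E(K;\cdot)\qquad(K\subset E),
\]
and consequently
\[
{\bf F}(K)=\frac{i!}{(n-1)!}\int_{\sfe\cap E} f(u)\,S_i^E(K;du)\qquad(K\subset E).
\]

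On the subclass of convex bodies contained in $E\cong\R^{i+1}$, the functional $\bf F$ is therefore, up to the positive constant $i!/(n-1)!$, the top-dimensional functional in $E$ associated to $f|_{\sfe\cap E}$. It inherits non-negativity and the inequality \eqref{WBM} from the hypothesis, and the exponent $1/i=1/((i+1)-1)$ is precisely the top-dimensional Brunn--Minkowski exponent for the ambient dimension $i+1$. Applying the result of \cite{Colesanti-Hug-Saorin} in the ambient space $E$ then shows that $f|_{\sfe\cap E}$ is the support function of some convex body contained in $E$; equivalently, $\bar f|_E$ is convex on $E$. Since every linear $2$-plane in $\R^n$ is contained in some $(i+1)$-dimensional linear subspace (recall $i+1\ge 3$), and since a positively $1$-homogeneous continuous function on $\R^n$ is convex precisely when its restriction to every linear $2$-plane through the origin is convex, one concludes that $\bar f$ is convex on $\R^n$ and hence that $f$ is a support function.

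The main obstacle I foresee is the mixed-measure reduction formula above, which rests on a careful iteration of the classical projection identity for mixed volumes involving segments together with a density argument (differences of support functions are dense in $C(\sfe)$) to upgrade the identity of integrals against support functions to the claimed equality of the full measure with $S_i^E(K;\cdot)$; once this formula is in hand, the rest follows routinely by invoking the known case $i=n-1$ in the ambient space $E$ and the $2$-plane criterion for convexity.
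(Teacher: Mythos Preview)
Your approach is correct and somewhat more direct than the paper's. Both proofs reduce to the known case $i=n-1$ from \cite{Colesanti-Hug-Saorin}, but by different descents. The paper argues by induction on $n$: fixing a hyperplane $E$, it chooses one of the free bodies to be a cylinder $B+R[0,e_n]$ (with $B$ a ball in $E$), computes the relevant mixed area measure via Lemma~\ref{l: mixed area measure cylinder}, and lets $R\to\infty$ to obtain a functional on $\K^{n-1}$ still satisfying \eqref{BM-min}; the induction hypothesis then yields convexity of $\bar f|_E$, and since $E$ was an arbitrary hyperplane, $\bar f$ is convex. Your argument instead jumps in one step to an $(i+1)$-dimensional subspace $E$ by taking the free bodies to be orthogonal unit segments spanning $E^\perp$; iterating the projection formula identifies ${\bf F}$, restricted to bodies in $E$, with a positive multiple of the top-order functional in $E\cong\R^{i+1}$, so the result of \cite{Colesanti-Hug-Saorin} applies directly in dimension $i+1$ with no induction and no limiting procedure. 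The measure identity you need is correct (with the constant $i!/(n-1)!$), and the density argument is routine. Your route avoids both the auxiliary lemma and the induction; the paper's route, on the other hand, is phrased for the weaker hypothesis \eqref{BM-min} and thereby proves Theorem~\ref{thm-general-BM+} simultaneously---though your segment reduction would adapt to that setting without change, since \eqref{BM-min} is inherited on restriction to $E$ just as \eqref{WBM} is.
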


The  general Brunn-Minkowski inequality \eqref{WBM} for the functional
$\bf F$ defined as in \eqref{F mixed area measure} implies that
\begin{equation}\label{BM-min}
{\bf F}((1-t)K+tL)\ge\min\{{\bf F}(K),{\bf F}(L)\}
\end{equation}
for all $K,L\in\K^n$ and $ t\in[0,1]$, which is in general weaker than \eqref{WBM}. 
However, in many cases it can be shown to be equivalent  to it 
 by a standard argument based  on homogeneity. Note that \eqref{BM-min} does not require 
$\bf F$ to be non-negative {\em a priori}. 

The characterization theorem proved in \cite{Colesanti-Hug-Saorin} for the functional $\bf F$ defined by \eqref{intro.1} in the case
$i=n-1$ was proved under the assumption \eqref{BM-min}. This leads to the following extension of 
Theorem \ref{thm-general-BM} in which condition \eqref{WBM} of Theorem \ref{thm-general-BM} is replaced
by \eqref{BM-min}  and the requirement that ${\bf F} $ be non-negative is removed. 

\begin{theorem}\label{thm-general-BM+}
Let $f\in C(S^{n-1})$ and $i\in\{2,\dots,n-1\}$  be such that for
any choice of convex bodies $K_1,\dots,K_{n-i-1}\in\K^n$ the functional ${\bf F}:\K^n
\longrightarrow \R$ defined by \eqref{F mixed area measure} satisfies \eqref{BM-min}. 
Then $f$ is the support function of a convex body.
\end{theorem}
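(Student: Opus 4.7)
The plan is to reduce Theorem \ref{thm-general-BM+} to the already-established Theorem \ref{thm-general-BM} by showing that the hypothesis \eqref{BM-min}, when required to hold for every choice of auxiliary bodies $K_1,\dots,K_{n-i-1}$, forces both the non-negativity of $\bf F$ and the stronger Brunn-Minkowski type inequality \eqref{WBM}.

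The first step is the standard homogeneity upgrade alluded to just after \eqref{BM-min} in the introduction. For each fixed choice of auxiliary bodies the functional $\bf F$ is positively $i$-homogeneous in $K$, so given $K,L$ with ${\bf F}(K),{\bf F}(L)>0$ I would rescale to $K'=K/{\bf F}(K)^{1/i}$ and $L'=L/{\bf F}(L)^{1/i}$, write $(1-t)K+tL$ as a scaled Minkowski combination of $K'$ and $L'$ with weights determined by the $i$-homogeneity, and apply \eqref{BM-min} to $(1-s)K'+sL'$. After undoing the scaling this recovers \eqref{WBM} on the subset of $\K^n$ where $\bf F$ is strictly positive, and a continuity/approximation argument (smoothing $K$ by small multiples of the unit ball) extends \eqref{WBM} to bodies where $\bf F$ vanishes, provided non-negativity of $\bf F$ has already been established.

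The crucial remaining task is to show that ${\bf F}(K)\ge 0$ for every $K$ and every choice of auxiliary bodies. I would argue by contradiction: if ${\bf F}(K_0)<0$ for some $K_0$ and some $M_1,\dots,M_{n-i-1}$, I would exploit the two features of the hypothesis that go beyond \eqref{BM-min} for a single functional, namely the multilinearity of the mixed area measure in the auxiliary slots and the freedom to replace each $M_j$ by a scaled copy or by a Minkowski sum with any other convex body. Choosing an auxiliary body $L$ with ${\bf F}(L)>0$ for the same $M_j$'s (such an $L$ exists, for example by comparison with the support-function case for which $\bf F$ is a classical mixed volume) and considering the Minkowski segment between $\lambda K_0$ and $L$ for $\lambda$ ranging in $(0,\infty)$, the continuity of $\bf F$ together with the asymptotic behaviour ${\bf F}(\lambda K_0)=\lambda^i{\bf F}(K_0)\to-\infty$ as $\lambda\to\infty$ should produce a sign change along the segment that is incompatible with \eqref{BM-min}.

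Once non-negativity and \eqref{WBM} are secured for every choice of auxiliary bodies, the hypothesis of Theorem \ref{thm-general-BM} is met and its conclusion that $f$ is a support function applies directly. The main obstacle will be the non-negativity step: \eqref{BM-min} is a strictly weaker quasi-concavity condition than \eqref{WBM}, and the argument must genuinely use the freedom to vary the auxiliary bodies, since this is exactly the extra flexibility that is absent in the case $i=n-1$ of \cite{Colesanti-Hug-Saorin}, where the corresponding statement is instead proved by direct adaptation rather than by reduction.
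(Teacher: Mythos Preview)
Your reduction strategy has a genuine gap in the non-negativity step, and it is not a technicality. Condition \eqref{BM-min} is a quasi-concavity statement: along any Minkowski segment, ${\bf F}$ stays above the minimum of its endpoint values. Quasi-concavity is perfectly compatible with sign changes; for instance, the one-variable model $t\mapsto -t^i$ on $[0,\infty)$ is concave (hence quasi-concave), positively $i$-homogeneous, and strictly negative away from $0$. In your sketch, if ${\bf F}(K_0)<0$ and ${\bf F}(L)>0$, then along $(1-t)(\lambda K_0)+tL$ the inequality \eqref{BM-min} only asserts ${\bf F}\ge {\bf F}(\lambda K_0)$, which places no constraint on the sign of ${\bf F}$ in the interior; the ``sign change incompatible with \eqref{BM-min}'' simply does not occur. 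You correctly sense that the freedom in the auxiliary bodies $K_1,\dots,K_{n-i-1}$ must be exploited, but your contradiction argument never actually uses it: the $M_j$'s are fixed throughout. A second, structural issue is that in the paper Theorem~\ref{thm-general-BM} is obtained \emph{as a corollary of} Theorem~\ref{thm-general-BM+}, so invoking it as ``already established'' is circular unless you supply an independent proof---and the natural independent proof is precisely the one below, rendering the reduction redundant.

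The paper's argument is entirely different and sidesteps non-negativity altogether. It runs by induction on the ambient dimension $n$, the base case $i=n-1$ being the main theorem of \cite{Colesanti-Hug-Saorin}, which is proved directly under the weak hypothesis \eqref{BM-min}. For the inductive step, fix a hyperplane $E=e_n^\perp$ and specialise one auxiliary body to the cylinder $B+R[-e_n,e_n]$ with $B=B^n\cap E$, keeping the remaining $K_1,\dots,K_{n-i-2}\subset E$ arbitrary. Lemma~\ref{l: mixed area measure cylinder} decomposes the resulting mixed area measure as $R/(n-1)$ times an $(n-1)$-dimensional mixed area measure on $E$ plus a bounded term supported at $\pm e_n$. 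Dividing \eqref{BM-min} by $R$ and letting $R\to\infty$ shows that $f|_{E}$ satisfies the same hypothesis in $\K^{n-1}$, so by induction $f|_{E}$ is a support function. Since $E$ was an arbitrary hyperplane, the $1$-homogeneous extension of $f$ is convex on every hyperplane through the origin, hence convex on $\R^n$. The freedom to vary the auxiliary bodies is thus used to manufacture a dimension reduction, not to control signs.
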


\section{Preliminaries}\label{sec-preliminaries}

We work in the $n$-dimensional Euclidean space $\R^n$, $n\ge2$, endowed with
the usual scalar product $\langle\cdot,\cdot\rangle$ and norm $\|\cdot\|$. We denote by $B^n$ the
closed unit ball centered at the origin, and by $\sfe$ the unit sphere.
Throughout the paper we will often use the  convention that we sum over
repeated indices.

\subsection{Convex bodies}
As stated in the introduction, for $n\ge1$ we denote by $\K^n$ the collection
of all non-empty compact convex subsets of $\R^n$, which are called {\em convex bodies}, for short.
Our reference text on the theory of convex bodies is the monograph
\cite{Schneider} by Schneider. Given $K,L\in \K^n$ and
$\alpha,\beta\ge0$, we write 
$\alpha K+\beta L=\{\alpha x+\beta y\ |\,x\in K,\ y\in L\}$ 
for the {\em Minkowski combination} of $K$ and $L$ with coefficients $\alpha$ and $\beta$. 

For $K\in\K^n$ we denote by $h_K$ the {\it support function} of
$K$, considered as a function on the unit sphere. 
We recall that support functions behave linearly with respect
to the operations introduced above. For $K,L\in \K^n$ and
$\alpha,\beta\ge0$, we have $h_{\alpha K+\beta L}=\alpha h_K+\beta h_L$. 
Another property of convex bodies which can be expressed in a simple way
via support functions is set inclusion. Indeed, for  $K,L\in\K^n$,
\begin{equation}\label{I.1b}
K\subset L\quad\mbox{if and only if}\quad h_K\le h_L \text{ on $\sfe$.}
\end{equation}

\medskip


We will frequently need to work with convex bodies whose boundary is smooth. Let us
introduce the following notation. 
We say that a convex body $K$ with non-empty interior is of class
$C^2_+$ (briefly, $K\in C^2_+$), if its boundary is of class $C^2$
and the Gauss curvature is strictly positive at every boundary point $x\in\partial K$.

For $\phi\in C^2(\sfe)$, $u\in\sfe$, and $i, j \in\{ 1,\dots,n-1\}$, we
put
\begin{equation*}
q_{ij}(\phi, u) := \phi_{ij} (u)+ \delta_{ij}\phi(u),
\end{equation*}
where $\phi_{ij}$ denote the second covariant derivatives of
$\phi$, computed with respect to a local orthonormal frame (of
vector fields) on $\sfe$ and $\delta_{ij}$ denote the usual Kronecker
symbols. Moreover we set
\begin{equation}\label{I.2}
Q(\phi,u)=\left( q_{ij}(\phi,u) \right)_{i,j=1}^{n-1}.
\end{equation} 
All relevant quantities and conditions will be independent of the
particular choice of a local orthonormal frame in the following.
For the sake of brevity, we sometimes omit the variable $u$ and
simply write $q_{ij}(\phi)$ or $Q(\phi)$. Note that the matrix
$Q(\phi,u)$ is {\it symmetric} for every $\phi\in C^2(\sfe)$ and
every $u\in\sfe$ (see \cite[Section 2]{Colesanti-Hug-Saorin} for
further  details). In the special case when $\phi$ is a support
function, the matrix $Q(\phi,\cdot)$ will play a crucial role in the sequel. 

We set
$$
\cf=\{h\in C^2(\sfe)\ |\ Q(h,u)>0\ \text{ for all } u\in\sfe\}\, ,
$$
where the notation $A>0$ stands for the matrix $A$ being positive
definite.

A proof of the following result can be deduced from \cite[Sect.
2.5]{Schneider}.
\begin{prop}\label{propI.1}
If $K\in C^2_+$, then $h_K\in\cf$. Conversely, if $h\in\cf$, then there
exists a uniquely determined $K\in C^2_+$ such that $h=h_K$.
\end{prop}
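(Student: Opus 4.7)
The plan is to establish both directions using the standard duality between $C^2_+$ convex bodies and smooth support functions with positive definite $Q$-matrix, which is the geometric content of \cite[Sect.~2.5]{Schneider}. The overall strategy is to identify $Q(h_K,u)$ with the inverse of the Weingarten map at the boundary point with outer unit normal $u$, so that positive definiteness of $Q$ corresponds exactly to positivity of the principal curvatures.

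First I would treat the direction $K\in C^2_+ \Rightarrow h_K\in \cf$. Since the Gauss curvature of $\partial K$ is strictly positive and $\partial K$ is $C^2$, the Gauss map $\nu_K:\partial K\to \sfe$ is a $C^1$ diffeomorphism. Denote by $\sigma=\nu_K^{-1}:\sfe\to \partial K$ its inverse. From $h_K(u)=\langle u,\sigma(u)\rangle$ and the fact that $\sigma(u)$ realizes the supremum defining $h_K(u)$, a short computation gives $\sigma(u)=h_K(u)u+\nabla_{\sfe}h_K(u)$. Differentiating this identity tangentially to $\sfe$ in an orthonormal frame, and using that $\sigma(u)$ lies in the tangent hyperplane with outer normal $u$, one identifies the matrix of $d\sigma_u$ (in that frame, and the induced frame on $T_{\sigma(u)}\partial K$) with $Q(h_K,u)$. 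But $d\sigma_u$ is exactly the inverse of the Weingarten map at $\sigma(u)$, whose eigenvalues are the principal radii of curvature $1/\kappa_i(\sigma(u))$. Since $K\in C^2_+$ forces each $\kappa_i>0$, the matrix $Q(h_K,u)$ is positive definite for every $u\in\sfe$, proving $h_K\in \cf$.

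For the converse I would start from $h\in \cf$ and first produce the body. Positive definiteness of $Q(h,u)$ implies positive semi-definiteness, which by the criterion recalled in the Introduction (and proved in the appendix of \cite{Colesanti-Hug-Saorin}) is equivalent to the convexity of the $1$-homogeneous extension $\bar h$. Hence $h$ is the support function of the uniquely determined convex body
\[
K=\bigcap_{u\in\sfe}\bigl\{x\in\R^n:\langle x,u\rangle\le h(u)\bigr\},
\]
and uniqueness of $K$ follows from $h_K\le h_L$ together with $h_L\le h_K$ implying $K=L$ via \eqref{I.1b}. It remains to show $K\in C^2_+$. Define the candidate parametrization $\sigma(u)=h(u)u+\nabla_{\sfe}h(u)$, which is $C^1$ on $\sfe$. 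By the same computation as above, its tangential differential is represented by $Q(h,u)$, which is positive definite; hence $\sigma$ is a $C^1$ immersion. One checks that $\sigma(u)\in K$ and that $u$ is an outer normal to $K$ at $\sigma(u)$, so $\sigma(\sfe)\subset \partial K$. Injectivity of $\sigma$ follows from strict convexity of $\bar h$ in tangential directions (a consequence of $Q(h)>0$), and surjectivity onto $\partial K$ from the fact that every boundary point of $K$ has some exterior unit normal $u$, combined with $\sigma(u)$ being the unique maximizer of $\langle\cdot,u\rangle$ on $K$. Therefore $\sigma:\sfe\to\partial K$ is a $C^1$ diffeomorphism, $\partial K$ is a $C^2$ hypersurface, and the principal curvatures at $\sigma(u)$ are the reciprocals of the (positive) eigenvalues of $Q(h,u)$, so the Gauss curvature is strictly positive.

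The main obstacle I anticipate is the geometric identification of $Q(h,u)$ with $d\sigma_u$ (equivalently, with the inverse Weingarten map), since this requires a careful translation between spherical covariant derivatives of $h$ and the intrinsic geometry of $\partial K$ and the correct handling of the $h(u)\,\delta_{ij}$ term coming from the curvature of $\sfe$. The injectivity and surjectivity of $\sigma$ are then routine given this identification and the convexity of $\bar h$; both pieces are carried out explicitly in \cite[Sect.~2.5]{Schneider}, which one can invoke directly to conclude.
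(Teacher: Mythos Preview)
Your proposal is correct and follows precisely the standard argument from \cite[Sect.~2.5]{Schneider}, which is exactly what the paper invokes: the paper does not give its own proof but simply states that the result ``can be deduced from \cite[Sect.~2.5]{Schneider}.'' Your sketch of the duality via the inverse Gauss map $\sigma(u)=h(u)u+\nabla_{\sfe}h(u)$ and the identification of $Q(h,u)$ with the reverse Weingarten map is the content of that reference, so there is nothing to add.
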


\medskip


The {\em mixed volume} of the convex bodies $K_1,\dots,K_n\in\K^n$ is denoted by 
$V(K_1,\dots,K_n)$.  For the {\em mixed area measure} of $K_1,\dots, K_{n-1}\in\K^n$, we write  $S(K_1,\dots,K_{n-1};\cdot)$;  
see \cite[Chapter 5]{Schneider} for the definitions. If in one of these functionals a convex body $K$ is repeated $i$ times,  
we use the notation $K[i]$, for instance,   we put 
$$
V(K[i],K_{i+1},\dots,K_{n}):=
V(\underbrace{K,\dots, K}_{\mbox{\tiny $i$-times}},K_{i+1},\dots,K_n).
$$
%
%
The mixed are measures are Borel measures defined on  $\sfe$. For the properties of area measures we refer
to \cite[Section 5.1]{Schneider}. The close connection between mixed volumes and mixed area measures is expressed by the relation
$$
V(K_1,\dots,K_n)=\frac1n\int_{\sfe}h_{K_n}(u)\, S(K_1,\dots,K_{n-1};du).
$$
For a given a convex body $K$ and $i\in\{1,\dots,n-1\}$, the {\em $i$th area measure} of $K$ 
is denoted by $S_i(K,\cdot)$ and equals the special mixed area measure 
 $S(K[i],B^n[n-i-1];\cdot)$.

\medskip

For the proof of our main results it will be important to
express the density of the area measures of a convex body $K$
in terms of the matrix $Q(h_K)$. Before stating such representations 
we need to recall some facts about elementary symmetric functions.

\subsection{Elementary symmetric functions and densities of area measures}\label{subsec-esm}
Let $N$ be an integer. We denote by $\mat(N)$ the set of $N\times
N$ symmetric matrices (with real entries). For an element
$A\in\mat(N)$ we write $A>0$ and $A\ge0$ if $A$ is positive
definite and positive semi-definite, respectively.

Let $A=(a_{jk})^N_{j,k=1}\in\mat(N)$, with eigenvalues
$\lambda_j$, $j=1,\dots,N$, and let $i\in\{0,1,\dots,N\}$. We define
$S_i(A)$ as the $i$th {\it elementary symmetric function}  of the
eigenvalues of $A$, that is,
$$
S_i(A)=\sum_{1\le j_1<\dots<j_i\le N}
\lambda_{j_1}\cdots\lambda_{j_i}\quad\mbox{if $i\ge1$,}
$$
and $S_0(A)=1$. Note, in particular, that $S_1(A)$ and $S_N(A)$ are
the trace and the determinant of $A$, respectively. An explicit description 
of $S_i(A)$ in terms of (the entries of) $A$ is provided in \eqref{SiA} below. 


For $N$, $A$
and $i$ as above, and for $j,k\in\{1,\dots,N\}$, we set
$$
S_i^{jk}(A)=\frac{\partial S_i}{\partial a_{jk}}(A).
$$
The $N\times N$ matrix consisting of the entries $S_i^{jk}(A)$
is sometimes called the {\em $i$th cofactor matrix of $A$}.
We will also need the second derivatives of $S_i(A)$ with respect
to the entries of $A$, which are denoted by
$$
S_i^{jk,rs}(A):=\frac{\partial^2 S_i}{\partial a_{jk}\partial
a_{rs}}(A),
$$
for every $i,j,r,s\in\{1,\dots,N\}$.

\bigskip

Let $K$ be a convex
body of class $C^2_+$ and $h\in\cf$  its support function. For  
 $i\in\{1,\dots,n-1\}$, the $i$th area measure $S_i(K;\cdot)$ of $K$ 
is absolutely continuous with respect to the Haussdorf measure $\H^{n-1}$
restricted to $\sfe$, and its density is given by the function
$$
u\mapsto S_i(Q(h),u),\qquad u\in \mathbb{S}^{n-1},
$$
(see, for example, \cite[5.3.2]{Schneider} for a proof).
In other words, for every $f\in C(\sfe)$ we have
$$
{\bf F}(K)=\int_{\sfe} f(u)S_i(Q(h,u))\, \H^{n-1}(du).
$$

\subsection{A lemma of Cheng and Yau}

For $\phi\in C^2(\sfe)$ and $i\in\{1,\dots,n-1\}$, consider the matrix
\begin{equation}\label{I.10}
(S_i^{jk}(Q(\phi,u)))_{j,k=1}^{n-1}
\end{equation}
as a function of $u\in\sfe$.
The following lemma will be of great
importance in the rest of this paper. It asserts that if we
consider any of the columns of \eqref{I.10} as a vector field on
$\sfe$, its divergence vanishes pointwise. 
The case $k=n-1$ was originally proved by Cheng and Yau
in \cite{Cheng-Yau}.

\begin{lema}\label{Cheng-Yau} Let $\phi\in C^3(\sfe)$ and $i\in\{1,\dots,n-1\}$. Then, for every $k\in\{1,\dots,n-1\}$, 
$$
\sum_{j=1}^{n-1}\left( S_i^{jk}(Q(\phi,u))
\right)_j=0\quad\text{for all }\ u\in\sfe.
$$
\end{lema}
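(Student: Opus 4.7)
The plan is to reduce the lemma to two ingredients: a Codazzi-type identity that makes $q_{ab;c}(\phi)$ totally symmetric in $a,b,c$, together with the antisymmetry of the generalized Kronecker symbol that appears in the tensorial expansion of $S_i$. Since $S_1^{jk}=\delta^j_k$ is constant, the case $i=1$ is immediate, so throughout I would assume $i\ge 2$.

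First I would verify the Codazzi identity for $\phi\in C^3(\sfe)$: the tensor $q_{ab;c}(\phi):=\phi_{abc}+\delta_{ab}\phi_c$ is totally symmetric in $a,b,c$. Symmetry in $(a,b)$ follows from the symmetry of the Hessian of a scalar. Symmetry in $(b,c)$ follows from the Ricci identity on $\sfe$: since the Riemann tensor in an orthonormal frame equals $R^l_{jki}=\delta_{ki}\delta^l_j-\delta_{ji}\delta^l_k$, commuting the last two covariant derivatives of $\phi$ produces exactly the inhomogeneous terms needed to cancel the $\delta$-terms in the definition of $q_{ab;c}$.

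Next I would use the tensorial representation
$$
S_i(A)=\frac{1}{i!}\,\delta^{j_1\ldots j_i}_{k_1\ldots k_i}\,a_{j_1k_1}\cdots a_{j_ik_i},
$$
where the generalized Kronecker symbol is antisymmetric separately in its upper and its lower indices. Differentiating twice and applying the chain rule to $u\mapsto S_i^{jk}(Q(\phi,u))$, then contracting with $j$, I arrive at
$$
\sum_{j=1}^{n-1}\bigl(S_i^{jk}(Q(\phi))\bigr)_j=\frac{1}{(i-2)!}\,\delta^{j\,r\,j_3\ldots j_i}_{k\,s\,k_3\ldots k_i}\,q_{rs;j}\,q_{j_3k_3}\cdots q_{j_ik_i}.
$$
The Kronecker factor is antisymmetric under the swap $j\leftrightarrow r$ of upper indices, while Codazzi combined with $q_{rs}=q_{sr}$ gives $q_{rs;j}=q_{rj;s}=q_{jr;s}=q_{js;r}$, so $q_{rs;j}$ is symmetric in $(j,r)$. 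The contraction of an antisymmetric pair with a symmetric pair of indices vanishes, proving the lemma.

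The main obstacle I anticipate is bookkeeping: extracting the Codazzi identity from the Ricci identity on $\sfe$ with the correct signs, and then matching the antisymmetric pair of upper indices in the generalized Kronecker with exactly the pair that becomes symmetric through Codazzi. Once that alignment is in place, the cancellation is automatic and requires no further computation.
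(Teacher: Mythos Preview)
Your proof is correct and uses essentially the same approach as the paper. The paper does not give a separate proof of this lemma, but its proof of the extension (Lemma~\ref{Cheng-Yau-ext}) rests on the identical two ingredients---the antisymmetry of the generalized Kronecker symbol and the Codazzi-type identity $g_{\alpha\beta\gamma}+g_\gamma\delta_{\alpha\beta}=g_{\alpha\gamma\beta}+g_\beta\delta_{\alpha\gamma}$ (equivalently, total symmetry of $q_{ab;c}$)---implemented there as an explicit term-by-term pairing rather than your cleaner symmetric/antisymmetric contraction.
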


We will also need a further generalization of Lemma \ref{Cheng-Yau}. Let $\phi, \psi\in C^3(\sfe)$. Then, for $u\in\sfe$, we 
define the matrix $M=M(u)=(m_{jk}(u))_{j,k=1,\dots,n-1}$ by
$$
m_{jk}(u)=S_i^{jk,rs}(Q(\phi,u))q_{rs}(\psi, u)
$$
(remember that we use the summation convention).
\begin{lema}\label{Cheng-Yau-ext}
In the above notation, for every $k\in\{1,\dots,n-1\}$,
\begin{equation*}
\sum_{j=1}^{n-1}(m_{jk}(u))_j=0\quad\forall\,u\in\sfe\,.
\end{equation*}
\end{lema}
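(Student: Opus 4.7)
The plan is to derive Lemma \ref{Cheng-Yau-ext} as a first variation of Lemma \ref{Cheng-Yau}. The key observation is that $Q$ is linear in its first argument: for any $t\in\R$,
$$
Q(\phi+t\psi,u)=Q(\phi,u)+tQ(\psi,u),\qquad q_{jk}(\phi+t\psi,u)=q_{jk}(\phi,u)+tq_{jk}(\psi,u).
$$
Since $\phi,\psi\in C^3(\sfe)$, the function $\phi+t\psi$ is also in $C^3(\sfe)$, so Lemma \ref{Cheng-Yau} applies to it and yields, for every fixed $t\in\R$ and every $k\in\{1,\dots,n-1\}$,
$$
\sum_{j=1}^{n-1}\bigl(S_i^{jk}(Q(\phi+t\psi,u))\bigr)_j=0 \qquad \text{for all } u\in\sfe.
$$

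Next I would differentiate this identity with respect to $t$ at $t=0$. Since $S_i^{jk}$ is a polynomial in the entries of its matrix argument, the chain rule together with the linearity of $Q$ in the first slot gives
$$
\frac{d}{dt}\bigg|_{t=0}S_i^{jk}(Q(\phi+t\psi,u))=S_i^{jk,rs}(Q(\phi,u))\,q_{rs}(\psi,u)=m_{jk}(u),
$$
where the summation convention is used in $r,s$.

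The essential step is then to justify interchanging the $t$-derivative with the covariant divergence in $u$. This is where the main (though mild) technical content lies: both $(t,u)\mapsto S_i^{jk}(Q(\phi+t\psi,u))$ and $(t,u)\mapsto m_{jk}(u)$ are polynomial expressions in the entries of $Q(\phi,u)$ and $Q(\psi,u)$, which themselves are $C^1$ in $u$ (as second covariant derivatives of $C^3$ functions) and smooth (in fact, affine) in $t$. Hence the mixed partial derivatives exist and are continuous, and Schwarz's theorem permits the exchange of $\partial/\partial t$ with the spherical derivatives $(\,\cdot\,)_j$. Applying this exchange to the identity obtained above and then evaluating at $t=0$ yields
$$
\sum_{j=1}^{n-1}(m_{jk}(u))_j=\sum_{j=1}^{n-1}\left(\frac{d}{dt}\bigg|_{t=0}S_i^{jk}(Q(\phi+t\psi,u))\right)_j=\frac{d}{dt}\bigg|_{t=0}\sum_{j=1}^{n-1}\bigl(S_i^{jk}(Q(\phi+t\psi,u))\bigr)_j=0,
$$
which is the assertion of Lemma \ref{Cheng-Yau-ext}. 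The only obstacle worth flagging is the verification that $\phi+t\psi$ inherits enough regularity to invoke Lemma \ref{Cheng-Yau}; this is immediate from $\phi,\psi\in C^3(\sfe)$, so there are no further subtleties.
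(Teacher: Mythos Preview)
Your argument is correct and in fact more elegant than the paper's. The linearity of $Q$ in its first slot means that $t\mapsto S_i^{jk}(Q(\phi+t\psi,u))$ is a polynomial of degree at most $i-1$ in $t$ with $C^1$ coefficients in $u$; the covariant divergence is then also polynomial in $t$, and Lemma~\ref{Cheng-Yau} forces every coefficient to vanish identically. The coefficient of $t$ is precisely $\sum_j (m_{jk})_j$, so the interchange of $\partial/\partial t$ with $(\cdot)_j$ is a matter of expanding a polynomial rather than an analytic subtlety.

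The paper instead repeats the direct combinatorial proof of Lemma~\ref{Cheng-Yau}: it writes $S_i^{jk,rs}$ via the generalized Kronecker symbol \eqref{SiA}, computes the divergence of $m_{jk}$ explicitly, and pairs off terms using the antisymmetry of the Kronecker symbol together with the identity $g_{\alpha\beta\gamma}+g_\gamma\delta_{\alpha\beta}=g_{\alpha\gamma\beta}+g_\beta\delta_{\alpha\gamma}$ for third covariant derivatives on the sphere. Your variational derivation treats Lemma~\ref{Cheng-Yau} as a black box and obtains the extension essentially for free, avoiding all of this bookkeeping; the paper's approach, while heavier, is self-contained and makes transparent that the same cancellation mechanism (the curvature identity for third derivatives) is at work in both lemmas.
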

\begin{proof} The proof follows the  argument used in the proof of \cite[Lemma
  1]{Colesanti-Saorin}. We use an explicit formula
for the $i$th cofactor matrix in terms of the entries of the original
matrix (see for instance \cite{Reilly} or \cite{Salani-PhD}). For $A\in\mat(n-1)$ we have
\begin{equation}\label{SiA}
S_i(A)=\frac{1}{i!}\sum{\delta\binom{j_1,\dots,j_{i}}{k_1,\dots,k_{i}}a_{j_1
k_1}\cdots a_{j_{i} k_{i}}},
\end{equation}
where the sum is taken over all possible indices $j_s, k_s \in
\{1,\dots,n-1\}$ (for $s=1,\dots,i$) and the Kronecker symbol
$\delta\binom{j_1,\dots,j_{i}}{k_1,\dots,k_{i}}$ equals $1$
(respectively, $-1$) when $j_1,\dots,j_{i}$ are distinct and
$(k_1,\dots,k_{i})$ is an even (respectively, odd) permutation
of $(j_1,\dots,j_{i})$; otherwise it is $0$. Using the above
equality, we have, for every $j,k,r,s\in\{1,\dots,n-1\}$,
\begin{eqnarray}
S^{jk}_i(A)&=&\frac{1}{(i-1)!}\sum{\delta\binom{j,j_1,\dots,j_{i-1}}{k,j_1,\dots,k_{i-1}}a_{j_1
k_1}\cdots a_{j_{i-1} k_{i-1}}}\,,\nonumber\\
\nonumber\\
S^{jk,rs}_i(A)&=&\frac{1}{(i-2)!}\sum{\delta\binom{r,j,j_1,\dots,j_{i-2}}{s,k,k_1,\dots,k_{i-2}}a_{j_1
k_1}\cdots a_{j_{i-2} k_{i-2}}}\,.
\end{eqnarray}

For simplicity, in the following formulas we omit the variable
$u\in\sfe$. Then for the matrix $m_{jk}$ we obtain
\begin{eqnarray*}
m_{jk}=\frac{1}{(i-2)!}\sum_{r,s}\sum{\delta\binom{r,j,j_1,\dots,j_{i-2}}{s,k,k_1,\dots,k_{i-2}}
q_{j_1k_1}(\phi)\cdots q_{j_{i-2}k_{i-2}}}(\phi)\,q_{rs}(\psi)\,.
\end{eqnarray*}
Hence
\begin{eqnarray*}
&&(i-2)!\sum_{j=1}^{n-1} (m_{jk})_j=\\
&&=\sum_{j=1}^{n-1}\sum_{r,s}
\sum \Bigg\{\delta\binom{r,j,j_1,\dots,j_{i-2}}{s,k,k_1,\dots,k_{\,i-2}}\times\nonumber\\
&&\times\Bigl[(\phi_{j_1 k_1j}+\phi_j\delta_{j_1k_1})(\phi_{j_2 k_2}+\phi\delta_{i_2j_2})\cdots
    (\phi_{j_{i-2}k_{i-2}}+\phi\delta_{j_{i-2}k_{i-2}})+\cdots\nonumber\\
&&+(\phi_{j_1k_1}+\phi\delta_{j_1k_1})\cdots(\phi_{j_{i-3} k_{i-3}}+\phi\delta_{j_{i-3}k_{i-3}})
    (\phi_{j_{i-2}k_{i-2}j}+\phi_j\delta_{j_{i-2}k_{i-2}})\Bigr](\psi_{rs}+\delta_{rs}\psi)+\nonumber\\
&&+(\phi_{j_1k_1}+\phi\delta_{j_1k_1})\cdots(\phi_{j_{i-2}j_{i-2}}
+\phi\delta_{j_{i-2}k_{i-2}})(\psi_{rsj}+\delta_{rs}\psi_j)\Bigg\}.\nonumber
\end{eqnarray*}
In the last sum, for fixed
$j_1,\dots,j_{i-2},k_1,\dots,k_{i-2},j,r,s$, we split the terms into two types:
those in which there are no third
covariant derivatives of $\psi$, and those where a 
third derivative of $\psi$ appears. As for the first type, consider the terms
\[
A=\delta_1(\phi_{j_1k_1j}+\phi_j\delta_{j_1k_1})C\quad\text{ and }\quad
B=\delta_2(\phi_{jk_1j_1}+\phi_{j_1}\delta_{jk_1})C\,,\;
\]
where
\[
\delta_1=\delta\binom{r,j,j_1,j_2,\dots,j_{i-2}}{s,k,k_1,k_2,\dots,k_{i-2}},\quad
\delta_2=\delta\binom{r,j_1,j,j_2,\dots,j_{i-2}}{s,k,k_1,k_2,\dots,k_{i-2}},
\]
and
\[
C=(\phi_{j_2k_2}+\phi\delta_{j_2k_2})\cdots(\phi_{j_{i-2}k_{i-2}}+\phi\delta_{j_{i-2}k_{i-2}})(\psi_{rs}+\delta_{rs}\psi).
\]
Clearly $\delta_2=-\delta_1$. On the other hand,  the 
 third order covariant derivatives of a function $g\in\c^3(\sfe)$ satisfy the symmetry relations
$$
g_{\alpha\beta\gamma}=g_{\beta\alpha\gamma}\,,\quad\alpha\,,\,\beta\,,\,\gamma=1,\dots,n-1\,,
$$
and
$$
g_{\alpha\beta\gamma}+g_\gamma\delta_{\alpha\beta}\equiv
g_{\alpha\gamma\beta}+g_\beta\delta_{\alpha\gamma}\,,\quad\alpha\,,\,\beta\,,\,\gamma=1,\dots,n-1\,.
$$
Consequently, 
\begin{align*}
A+B&=\delta_1C\bigl(\phi_{j_1k_1j}+\phi_j\delta_{j_1k_1}-\phi_{jk_1j_1}-\phi_{j_1}\delta_{jk_1}\bigr)\\
&=\delta_1C\bigl(\phi_{k_1j_1j}+\phi_j\delta_{j_1k_1}-\phi_{jk_1j_1}-\phi_{j_1}\delta_{jk_1}\bigr)\\
&=\delta_1C\bigl(\phi_{k_1jj_1}+\phi_{j_1}\delta_{jk_1}-\phi_{jk_1j_1}-\phi_{j_1}\delta_{jk_1}\bigr)\\
&=0.
\end{align*}
For any term $A$ (of the mentioned type) in
the above sum, there exists another term $B$, uniquely determined,
which cancels out with $A$. 

Concerning the terms of the second type, consider
the summands
\[
E=\delta_3(\psi_{rsj}+\psi_j\delta_{rs})D\quad\text{ and }\quad
F=\delta_4(\psi_{jsr}+\psi_{r}\delta_{js})D\,,\;
\]
where
\[
\delta_3=\delta\binom{r,j,j_1,j_2,\dots,j_{i-2}}{s,k,k_1,k_2,\dots,k_{i-2}},\quad
\delta_4=\delta\binom{j,r,j_1,j_2,\dots,j_{i-2}}{s,k,k_1,k_2,\dots,k_{i-2}},
\]
and
\[
D=(\phi_{j_1k_1}+\phi\delta_{j_1k_1})\cdots(\phi_{j_{i-2}k_{i-2}}+\phi\delta_{j_{i-2}k_{i-2}})\,.
\]
Again, it is clear that $\delta_3=-\delta_4$, and by the same reasoning as before we get $E+F=0$, 
which concludes the proof.
\end{proof}

\begin{remark}\label{rem-int-by-parts}{\rm 
As a consequence of Lemma \ref{Cheng-Yau}, together with the divergence
theorem applied twice on the sphere and the definition
\eqref{I.2} of the matrix $Q$, it is easy to prove that, for $h\in
\cf$ and $f,\phi\in C^2(\sfe)$, 
\[
\begin{split}\label{I.15}
\int_{\sfe}f S_i^{kj}(Q(h)) q_{kj}(\phi )\,d\H^{n-1}
&=\int_{\sfe}\left(f\phi\,{\rm trace}(S_i^{kj}(Q(h)))+fS_i^{kj}(Q(h))\phi_{kj}\right)\,d\H^{n-1}\\
&=\int_{\sfe}\left(f\phi\,{\rm trace}(S_i^{kj}(Q(h)))-S_i^{kj}(Q(h))f_j\phi_{k}\right)\,d\H^{n-1}\\
&=\int_{\sfe}\left(f\phi\,{\rm trace}(S_i^{kj}(Q(h)))+\phi S_i^{kj}(Q(h))f_{kj}\right)\,d\H^{n-1}\\
&=\int_{\sfe}\phi S_i^{kj}(Q(h)) q_{kj}(f)\,d\H^{n-1}.
\end{split}
\]
By Lemma \ref{Cheng-Yau-ext}, the same conclusion holds if we
replace the matrix $\left(S_i^{jk}(Q(h))\right)_{j,k=1,\dots,n-1}$
by the matrix $\left(S_i^{jk,rs}(Q(h))q_{rs}(\phi)\right)_{j,k=1\dots,n-1}$. Note that here we assume that 
$\phi\in C^2(\sfe)$, while Lemma \ref{Cheng-Yau} and Lemma \ref{Cheng-Yau-ext} are stated for functions of class $C^3$. 
The extension follows by a straightforward approximation argument.}
\end{remark}

\subsection{Mollification}
We recall a standard method to approximate continuous functions on
the unit sphere by smooth functions. Let $\psi:\R\to[0,\infty)$ be
a function of class $C^\infty$ with ${\rm
sprt}(\psi)\subset [-1,1]$ and $\psi(0)>0$. Then, for $k\in\N$,
we define $\omega_k:\mathbf{O}(n)\to [0,\infty)$ by $
\omega_k(\rho):=c_k\cdot \psi(k^2\cdot \|\rho-{\rm id}\|^2)\,, $
where $\mathbf{O}(n)$ is the group of rotations of $\R^n$ endowed
with the Haar probability measure $\nu$, ``${\rm id}$'' is the
identity element in ${\mathbf{O}(n)}$ and $c_k$ is chosen such
that $ \int_{\mathbf{O}(n)}\omega_k(\rho)\, \nu(d\rho)=1. $ As a
composition of $C^\infty$ maps, $\omega_k$ is of class $C^\infty$.
The following lemma is standard.
\begin{lema}
\label{l:regularization} Let $f\in\c(\s^{n-1})$. Then, for
$k\in\N$, the function $f_k:\s^{n-1}\to\R$ defined by
\begin{equation}\label{f_k mollification}
f_k(u):=\int_{\mathbf{O}(n)}f(\rho u) \omega_k(\rho)\, \nu(d\rho)\,,\qquad u\in \s^{n-1}\,,
\end{equation}
is of class $C^\infty (\sfe)$, and the sequence $(f_k)_{k\in\N}$
converges to $f$ uniformly on $\s^{n-1}$.
\end{lema}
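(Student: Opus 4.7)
The plan is to verify two separate assertions: (i) uniform convergence $f_k \to f$, which follows from the shrinking support of $\omega_k$ together with uniform continuity of $f$; and (ii) $C^\infty$-smoothness of each $f_k$, which follows from the smoothness of $\omega_k$ once one uses the invariance of $\nu$ to transfer the $u$-dependence away from the continuous factor $f$.

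For (i), since $\omega_k$ has total integral $1$ against $\nu$, one writes
$$f_k(u) - f(u) = \int_{\mathbf{O}(n)} \bigl(f(\rho u) - f(u)\bigr)\, \omega_k(\rho)\, \nu(d\rho).$$
The support condition $\mathrm{sprt}(\psi) \subset [-1,1]$ forces $\omega_k$ to vanish outside $\{\rho \in \mathbf{O}(n) : \|\rho - \mathrm{id}\| \leq 1/k\}$. The map $(\rho, u) \mapsto f(\rho u) - f(u)$ is continuous on the compact product $\mathbf{O}(n) \times \sfe$ and vanishes identically on $\{\mathrm{id}\} \times \sfe$, hence is uniformly small there in a neighborhood of this set. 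Estimating the integrand by its supremum yields $\sup_{u \in \sfe}|f_k(u) - f(u)| \to 0$ as $k \to \infty$.

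For (ii), fix $u_0 \in \sfe$ and, on an open neighborhood $U$ of $u_0$, choose a smooth local section $s: U \to \mathbf{O}(n)$ with $s(u) u_0 = u$ for all $u \in U$; for instance, one may take $s(u)$ to be the rotation in $\mathrm{span}\{u_0,u\}$ that carries $u_0$ to $u$, extended by the identity on the orthogonal complement, a construction valid on $\sfe \setminus \{-u_0\}$ and modifiable near the antipode. Substituting $\tau = \rho\, s(u)$ and using the right-invariance of $\nu$ gives, for $u \in U$,
$$f_k(u) = \int_{\mathbf{O}(n)} f(\tau u_0)\, \omega_k\bigl(\tau s(u)^{-1}\bigr)\, \nu(d\tau).$$
Since $\omega_k \in C^\infty(\mathbf{O}(n))$, matrix inversion is smooth on $\mathbf{O}(n)$, and $s$ is smooth on $U$, the integrand is of class $C^\infty$ in $u$ for every fixed $\tau$. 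Compactness of $\mathbf{O}(n)$ together with boundedness of $f$ on $\sfe$ provides a uniform integrable dominant for every partial derivative of every order in $u$, so differentiation under the integral sign is justified. Hence $f_k \in C^\infty(U)$, and as $u_0$ was arbitrary, $f_k \in C^\infty(\sfe)$.

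The main technical point is the construction of a smooth local section $s$ (and the ensuing smooth dependence of $\rho \mapsto \rho\, s(u)^{-1}$ on $u$), together with noticing that left/right translation on $\mathbf{O}(n)$ moves the non-smooth factor $f(\tau u_0)$ completely out of the way; once this geometric setup is in place, both the smoothness via differentiation under the integral and the uniform convergence estimate are routine.
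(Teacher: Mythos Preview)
The paper does not supply a proof of this lemma at all; it simply declares the result ``standard'' and moves on. Your argument is correct and fills in exactly the details one would expect: the uniform-convergence part is the routine shrinking-support estimate, and for smoothness the substitution $\tau=\rho\,s(u)$ via a smooth local section of the principal bundle $\mathbf{O}(n)\to\sfe$, $\rho\mapsto\rho u_0$, is the standard device that isolates the $u$-dependence in the smooth factor $\omega_k(\tau s(u)^{-1})$ and makes differentiation under the integral legitimate.
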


\section{Conditions for monotonicity}\label{sec-monotonicity}

In this section we prove Theorems \ref{thm-mon-1} and \ref{thm-mon-2}. We recall
that $F$ is said to be ``monotonic'', when  $F$ 
is increasing with respect to set inclusion (see \eqref{monotonicity}).

Let $K\in\K^n$ be of class $C^2_+$ and let $h$ be its support
function, hence $h\in{\mathfrak C}$. If $\phi\in C^2(\sfe)$, then there
exists $\epsilon>0$ such that
\[
h_s:=h+s\phi\in\cf\quad\mbox{for every $s$ such that
$|s|\le\epsilon$.}
\]
Hence, for every $s\in[-\epsilon,\epsilon]$ there exists a convex
body $K_s$ of class $C^2_+$ such that $h_s=h_{K_s}$ (by Proposition
\ref{propI.1}). Note that \eqref{I.1b} implies that $\phi\ge 0$ if and only if 
$K_{s_1}\subset K_{s_2}$ whenever $-\epsilon\le s_1\le s_2\le\epsilon$. 

The quantity ${\bf F}(K_s)$ is well defined for $|s|\le\epsilon$, 
and  its derivative at $s=0$ is given by
\begin{equation}\label{II.5}
\left.\frac d{ds} {\bf F}(K_s) \right|_{s=0}= \int_{\sfe}f
S_i^{kj}(Q(h)) q_{kj}(\phi)\,d\H^{n-1}.
\end{equation}

Next assume that $\bf F$ is monotonic and let $\phi$ be
non-negative on $\sfe$. Then  $s\mapsto {\bf F}(K_s)$ is an
increasing function for $|s|\le\epsilon$ so that
\begin{equation}\label{II.10}
\int_{\sfe}f S_i^{kj}(Q(h)) q_{kj}(\phi)\,d\H^{n-1}\ge0.
\end{equation}

Conversely, assume  that \eqref{II.10} holds for every $h\in\cf$ and
every non-negative $\phi\in C^2(\sfe)$. Let $K$ and $L$ be convex
bodies of class $C^2_+$ such that $K\subset L$ and define
$$
H(s)={\bf F}((1-s)K+sL),\quad s\in[0,1].
$$
As above we get
$$
H'(s)= \int_{\sfe}f S_i^{kj}(Q(h_s))
q_{kj}(h_L-h_K)\,d\H^{n-1},
$$
where $h_s=(1-s)h_K+sh_L$. Since $K\subset L$, we have $h_L-h_K\ge0$ on
$\sfe$. If we apply \eqref{II.10} with $\phi=h_L-h_K$, we obtain
that $H$ is increasing. Hence ${\bf F}(K)=H(0)\le H(1)={\bf
F}(L)$. This means that $\bf F$ is monotonic if restricted to
 convex bodies of class $C^2_+$; but as  convex bodies of class $C^2_+$ are dense in
$\K^n$ and $\bf F$ is continuous, we deduce that $\bf F$ is
monotonic on $\K^n$. Thus we have proved the following statement.

\begin{prop}\label{propII.1}
Assume that $i\in\{1,\dots,n-1\}$ and $\bf F$ is given by \eqref{intro.1} with $f\in
C(\sfe)$. Then $\bf F$ is monotonic on $\K^n$ if and only if
\begin{equation}\label{II.15}
\int_{\sfe}f S_i^{kj}(Q(h)) q_{kj}(\phi)\,d\H^{n-1}\ge0
\end{equation}
for all $ h\in\cf$ and all $\phi\in
C^2(\sfe)$ with $\phi\ge0$ on $\sfe$.
\end{prop}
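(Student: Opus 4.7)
The strategy is a standard first-variation argument for smooth bodies, complemented by an inclusion-preserving density argument. I would begin from the representation ${\bf F}(K) = \int_{\sfe} f(u)\, S_i(Q(h_K,u))\, d\H^{n-1}(u)$, valid whenever $K \in C^2_+$ so that $h_K \in \cf$ (recalled in Section \ref{sec-preliminaries}). Since $q_{kj}$ is linear in its functional argument, one has $q_{kj}(h + s\phi) = q_{kj}(h) + s\, q_{kj}(\phi)$; as $S_i$ is a polynomial in the matrix entries with partial derivatives $S_i^{kj}$, differentiation under the integral sign via the chain rule delivers the first-variation formula \eqref{II.5}.

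For the necessity direction, fix $h \in \cf$ and $\phi \in C^2(\sfe)$ with $\phi \geq 0$. Openness of $\cf$ in $C^2(\sfe)$ gives $\epsilon > 0$ with $h_s := h + s\phi \in \cf$ for $|s| \leq \epsilon$, and Proposition \ref{propI.1} supplies bodies $K_s \in C^2_+$ with $h_{K_s} = h_s$. By \eqref{I.1b}, $\phi \geq 0$ forces $K_{s_1} \subset K_{s_2}$ whenever $s_1 \leq s_2$, so monotonicity of $\bf F$ makes $s \mapsto {\bf F}(K_s)$ nondecreasing; its derivative at $0$ is then $\geq 0$, which is precisely \eqref{II.15}.

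For sufficiency, I would first handle $C^2_+$ bodies: given $K, L \in C^2_+$ with $K \subset L$, convexity of $\cf$ ensures $(1-s)h_K + s h_L \in \cf$ for $s \in [0,1]$, so $H(s) := {\bf F}((1-s)K + sL)$ is smooth. Computing $H'(s)$ as in \eqref{II.5} with $\phi := h_L - h_K \geq 0$ (by \eqref{I.1b}), hypothesis \eqref{II.15} yields $H'(s) \geq 0$, hence ${\bf F}(K) \leq {\bf F}(L)$. To extend to arbitrary $K \subset L$ in $\K^n$, I would build inclusion-preserving approximations by smoothing support functions: set $h_{K_m} := T_m h_K + \tfrac{1}{m} h_{B^n}$, where $T_m$ denotes the rotational averaging of Lemma \ref{l:regularization}, and define $h_{L_m}$ analogously. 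Since the smoothed support functions already satisfy $Q \geq 0$, the additive ball forces $Q(h_{K_m}), Q(h_{L_m}) > 0$, so $K_m, L_m \in C^2_+$ by Proposition \ref{propI.1}; pointwise preservation of $h_K \leq h_L$ under averaging gives $K_m \subset L_m$; and $K_m \to K$, $L_m \to L$ in Hausdorff distance. Continuity of $\bf F$ then transfers ${\bf F}(K_m) \leq {\bf F}(L_m)$ to the limit.

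The main obstacle I anticipate is precisely the density step, which must simultaneously achieve Hausdorff convergence, land in the class $C^2_+$, and preserve set inclusion; the recipe above (rotational mollification plus a shrinking ball applied directly to support functions) meets all three requirements. The remainder is routine: differentiation under the integral sign, the convexity of $\cf$, and the continuity of $\bf F$ on $\K^n$ coming from the weak continuity of the area measures.
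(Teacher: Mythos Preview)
Your proposal is correct and follows essentially the same route as the paper: the first-variation formula \eqref{II.5}, necessity via monotonicity of $s\mapsto {\bf F}(K_s)$, sufficiency for $C^2_+$ bodies via $H(s)={\bf F}((1-s)K+sL)$, and then passage to general bodies by approximation and continuity. The one point where you are more explicit than the paper is the last step: the paper simply invokes density of $C^2_+$ bodies and continuity of ${\bf F}$, whereas you spell out an inclusion-preserving approximation (rotational mollification of support functions plus a shrinking ball), which is exactly what is needed to make that sentence rigorous.
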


\bigskip

Assume that $f\in C^2(\sfe)$ and that $\bf F$ is monotonic. Then Remark \ref{rem-int-by-parts} 
implies that in \eqref{II.15} 
the roles of $f$ and $\phi$  can be interchanged so that
$$
\int_{\sfe}\phi S_i^{kj}(Q(h)) q_{kj}(f)\, d\H^{n-1}\ge0
$$
for all $ h\in\cf$ and all $\phi\in
C^2(\sfe)$ with $\phi\ge0$ on $\sfe$. 
From this we infer  
the pointwise condition
\begin{equation}\label{II.20}
S_i^{kj}(Q(h,u)) q_{kj}(f,u)\ge0 
\end{equation}
for all $h\in\cf$ and $ u\in\sfe$. 

The converse is obviously true as well, that is, \eqref{II.20} implies the
integral condition \eqref{II.15}.

\begin{prop}\label{propII.2} Assume that $i\in\{1,\dots,n-1\}$ and $\bf F$ is 
given by \eqref{intro.1} with $f\in C^2(\sfe)$.
Then $\bf F$ is monotonic on $\K^n$ if and only if \eqref{II.20} holds.
\end{prop}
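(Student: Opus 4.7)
The argument is essentially a pointwise-vs-integral dualization, building on Proposition II.1 and the integration-by-parts identity of Remark \ref{rem-int-by-parts}. My plan has two directions.

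For the sufficiency direction, I would assume \eqref{II.20} and verify the integral condition \eqref{II.15} appearing in Proposition \ref{propII.1}. Fix $h\in\cf$ and $\phi\in C^2(\sfe)$ with $\phi\ge 0$. Since $f\in C^2(\sfe)$, the identity in Remark \ref{rem-int-by-parts} applies and gives
\[
\int_{\sfe} f\,S_i^{kj}(Q(h))\,q_{kj}(\phi)\,d\H^{n-1}
=\int_{\sfe} \phi\,S_i^{kj}(Q(h))\,q_{kj}(f)\,d\H^{n-1}.
\]
By \eqref{II.20} the integrand on the right is a product of two non-negative functions, hence the integral is non-negative. Proposition \ref{propII.1} then delivers monotonicity of $\bf F$.

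For the necessity direction, I would start from the integral inequality \eqref{II.15} provided by Proposition \ref{propII.1} and again use Remark \ref{rem-int-by-parts} to transfer the covariant derivatives from $\phi$ onto $f$, obtaining
\[
\int_{\sfe} \phi\,S_i^{kj}(Q(h))\,q_{kj}(f)\,d\H^{n-1}\ge 0
\]
for every $h\in\cf$ and every non-negative $\phi\in C^2(\sfe)$. The passage from this integral statement to the pointwise statement \eqref{II.20} is a standard localization argument: suppose toward a contradiction that for some $h\in\cf$ and some $u_0\in\sfe$ one had $S_i^{kj}(Q(h,u_0))\,q_{kj}(f,u_0)<0$. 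Since both $h$ and $f$ are of class $C^2$, the map $u\mapsto S_i^{kj}(Q(h,u))\,q_{kj}(f,u)$ is continuous, so it remains strictly negative in an open spherical neighborhood $U$ of $u_0$. Choosing $\phi\in C^2(\sfe)$ to be any smooth non-negative bump function not identically zero and supported in $U$ produces an integrand that is non-positive and strictly negative on a set of positive $\H^{n-1}$-measure, contradicting the displayed inequality.

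The only step that requires care is the invocation of Remark \ref{rem-int-by-parts}: it rests on the Cheng–Yau type divergence identity (Lemma \ref{Cheng-Yau}) together with two applications of the spherical divergence theorem, and both $f$ and $\phi$ must lie in $C^2(\sfe)$, which is precisely the regularity hypothesis of the proposition. The remaining ingredients — continuity of the relevant symmetric-function contractions in $u$, existence of smooth non-negative bumps on $\sfe$, and density of $C^2_+$ bodies (already exploited in deriving Proposition \ref{propII.1}) — are routine, so I expect no real obstacle beyond bookkeeping.
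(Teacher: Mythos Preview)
Your proposal is correct and follows essentially the same approach as the paper: in both directions you invoke Proposition \ref{propII.1} and the integration-by-parts identity of Remark \ref{rem-int-by-parts} to swap $f$ and $\phi$, then use the arbitrariness of the non-negative test function $\phi$ to pass between the integral condition \eqref{II.15} and the pointwise condition \eqref{II.20}. The paper's write-up is terser (it says ``from this we infer the pointwise condition'' and ``the converse is obviously true''), but your localization via a bump function spells out exactly what is meant.
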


In order to further investigate condition \eqref{II.20}, we
need the following result.

\begin{lema}\label{lemmaII.1} Let $A\in\mat(n-1)$, $A>0$, and let $u\in\sfe$. Then there exists a (symmetric) convex body
$K\in\K^n$ of class $C^2_+$ such that
$$
Q(h_K,u)=A.
$$
\end{lema}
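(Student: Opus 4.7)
The plan is to realize $A$ as $Q(h_K,u)$ explicitly by an ellipsoid. Since $A$ is positive definite on $u^\perp \cong \R^{n-1}$, the spectral theorem gives an orthonormal basis $\tau_1,\dots,\tau_{n-1}$ of $u^\perp$ and positive numbers $a_1,\dots,a_{n-1}$ such that $A=\sum_{i=1}^{n-1}a_i\,\tau_i\tau_i^T$. Set $\tau_n:=u$, fix any $c>0$, and put $b_i:=\sqrt{a_i c}$ for $i=1,\dots,n-1$ and $b_n:=c$. I would then take
\[
K:=\left\{x\in\R^n:\ \sum_{i=1}^n\frac{\langle x,\tau_i\rangle^2}{b_i^2}\le 1\right\},
\]
which is an origin-symmetric ellipsoid of class $C^\infty$ with everywhere strictly positive Gauss curvature, so $K\in C^2_+$.

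The remaining task is to compute $Q(h_K,u)$ and check that it equals $A$. I would do this via the 1-homogeneous extension $\bar h_K$ of $h_K$, using $\bar h_K(x)=\sqrt{\langle x,Mx\rangle}$ with $M:=\sum_{i=1}^n b_i^2\,\tau_i\tau_i^T>0$. A direct calculation gives
\[
D^2\bar h_K(u)=\frac{1}{\sqrt{\langle u,Mu\rangle}}\,M-\frac{1}{\langle u,Mu\rangle^{3/2}}(Mu)(Mu)^T.
\]
Since $Mu=b_n^2 u$ and $\langle u,Mu\rangle=b_n^2=c^2$, the rank-one correction cancels precisely the $\tau_n\tau_n^T$ summand in $M$, so
\[
D^2\bar h_K(u)=\frac{1}{c}\sum_{i=1}^{n-1}b_i^2\,\tau_i\tau_i^T=\sum_{i=1}^{n-1}a_i\,\tau_i\tau_i^T.
\]
In particular $u$ lies in the kernel (as it must, by $1$-homogeneity), and the restriction of $D^2\bar h_K(u)$ to $u^\perp$, expressed in the orthonormal frame $\tau_1,\dots,\tau_{n-1}$, coincides with $A$.

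The only nontrivial point is the identification of the restriction of $D^2\bar h_K(u)$ to $u^\perp$ with $Q(h_K,u)$ in a suitable local orthonormal frame of $\s^{n-1}$ at $u$; this is the standard correspondence between the Euclidean Hessian of the homogeneous extension of a support function and the matrix $Q$ (see the discussion after \eqref{I.2} and \cite[Sect.~2.5]{Schneider}). Once this is invoked, the construction above yields $Q(h_K,u)=A$ with $K$ centrally symmetric and of class $C^2_+$, as required.
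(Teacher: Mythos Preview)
Your proof is correct and follows essentially the same approach as the paper: both construct an explicit ellipsoid whose support function has Euclidean Hessian at $u$ with the prescribed restriction to $u^\perp$. The paper first treats the special case $u=(0,\dots,0,1)$ with $A$ diagonal (and $c=1$ in your notation), then reduces the general case by an orthogonal change of coordinates, whereas you handle everything in one coordinate-free step via the spectral decomposition; this is a cosmetic difference rather than a substantive one.
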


\begin{proof} We first consider the case $u=(0,\dots,0,1)$ and $A=\diag\{A_1,\dots,A_{n-1}\}$, $A_k>0$ for every
$k=1,\dots,n-1$. We set $A_n=1$. The function $\bar h\,:\,\R^n\to\R$ defined by
\[
\bar h(x)=h(x_1, \ldots,
x_n)=\left(\sum_{k=1}^{n}A_k\,x_k^2\right)^{1/2}
\]
is convex, and it is the 1-homogeneous extension of the support function $h=h_{\mathcal E}$ of an ellipsoid
$\mathcal E$.  
For  $x\ne0$ we have
\[
\frac{\partial \bar h}{\partial
x_i}(x)=\frac{A_i\,x_i}{\bar h(x)}
\]
and
\[
\frac{\partial^2
\bar h}{\partial\,x_i\partial\,x_j}(x)=\frac{A_i\delta_{ij}}{\bar h(x)}-\frac{A_i
A_j x_i x_j}{\bar h^3(x)}.
\]
The (Euclidean) Hessian matrix of $\bar h$ at $u$ is
\[
D^2\bar h(u)=\begin{pmatrix}
A_1 &0  &\dots &0 &0\\
0 &A_2 & \dots &0 &0\\
\vdots &\vdots &\ddots &0&0\\
0 & \dots &0 &A_{n-1}&0\\
0 &0 & \cdots &0&0\\ 
\end{pmatrix}.
\]
To compute the covariant derivatives of $h$, we can use the usual
partial derivatives of $\bar h$ (see \cite[\S2.5]{Schneider}
and also \cite[Appendix A.2]{Colesanti-Hug-Saorin}) to obtain that 
\[
Q(h,u)=\diag\{A_1,\dots,A_{n-1}\},
\]
which finishes the proof in the case where $A>0$ is diagonal and $u=(0,\dots,0,1)$.
In the general case, let $T$
be an orthogonal $(n-1)\times(n-1)$ matrix such that
$T A T^t=\diag\{A_1,\dots,A_{n-1}\}$.
Choose a coordinate system such that $u=T((0,\dots,0,1))$, and repeat the above
construction of the ellipsoid $\mathcal E$ for the matrix $TAT^t$ with respect to such a system.
Then we have that $T A T^t= Q(h,(0,\dots,0,1))$.
Using again the Euclidean derivatives to calculate the covariant derivatives (see \cite[\S2.5]{Schneider}
and also \cite[Appendix A.2]{Colesanti-Hug-Saorin}),
it is not difficult to see that $A=T^t Q(h,(0,\dots,0,1)) T=
Q(h,T(0,\dots,0,1))=Q(h,u)$, which concludes the proof.
\end{proof}

By Proposition \ref{propII.1} and the above lemma, we immediately
obtain the following result.

\begin{prop}\label{propII.3} Assume that $i\in\{1,\dots,n-1\}$ and
$\bf F$ is given by \eqref{intro.1} with $f\in C^2(\sfe)$. Then $\bf F$ is monotonic in
$\K^n$ if and only if
\begin{equation}\label{II.25}
S_i^{kj}(A) q_{kj}(f,u)=
\tr((S_i^{kj}(A))\cdot Q(f,u))\ge0 
\end{equation}
for all $A\in\mat(n-1)$, $A>0$, and 
$u\in\sfe$.
\end{prop}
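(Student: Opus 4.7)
The plan is to deduce Proposition \ref{propII.3} directly from Proposition \ref{propII.2} together with Lemma \ref{lemmaII.1}, by translating the statement that the pointwise inequality \eqref{II.20} holds for all support functions $h\in\cf$ into the statement that it holds for all admissible matrices $A>0$.

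For the forward implication, suppose $\bf F$ is monotonic. Proposition \ref{propII.2} gives
$$
S_i^{kj}(Q(h,u))\, q_{kj}(f,u)\ge 0
$$
for every $h\in\cf$ and every $u\in\sfe$. I would now fix an arbitrary $u\in\sfe$ and an arbitrary symmetric positive definite matrix $A\in\mat(n-1)$ and invoke Lemma \ref{lemmaII.1} to produce a convex body $K\in C^2_+$ whose support function $h_K\in\cf$ realizes $Q(h_K,u)=A$. Substituting this $h_K$ into \eqref{II.20} at the point $u$ yields
$$
S_i^{kj}(A)\, q_{kj}(f,u)\ge 0,
$$
which is exactly \eqref{II.25}. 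The equality $S_i^{kj}(A)q_{kj}(f,u)=\tr((S_i^{kj}(A))\cdot Q(f,u))$ is purely formal: it is the standard expression of the trace of the product of two symmetric matrices in terms of their entries, together with the definition $Q(f,u)=(q_{kj}(f,u))$.

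For the backward implication, suppose \eqref{II.25} holds for every $A>0$ in $\mat(n-1)$ and every $u\in\sfe$. Given any $h\in\cf$ and any $u\in\sfe$, the matrix $A:=Q(h,u)$ is symmetric (recalled in Section \ref{sec-preliminaries}) and positive definite by the very definition of $\cf$. Applying \eqref{II.25} to this particular $A$ at the point $u$ produces
$$
S_i^{kj}(Q(h,u))\, q_{kj}(f,u)\ge 0,
$$
which is \eqref{II.20}. Proposition \ref{propII.2} then gives monotonicity of $\bf F$.

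There is no genuine obstacle here, as all the ingredients have been prepared: Proposition \ref{propII.2} reduces monotonicity to the pointwise condition \eqref{II.20} along support functions of $C^2_+$ bodies, and Lemma \ref{lemmaII.1} shows that the matrices $Q(h,u)$ arising this way exhaust all of $\{A\in\mat(n-1):A>0\}$ as $h$ and $u$ vary. The only point worth flagging in writing is to clearly note the symmetry of the relevant matrices so that $S_i^{kj}(A)q_{kj}(f,u)$ and $\tr((S_i^{kj}(A))\cdot Q(f,u))$ coincide without any extra symmetrization.
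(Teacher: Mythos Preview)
Your proof is correct and follows essentially the same approach as the paper, which simply states that the result follows immediately from the preceding proposition and Lemma \ref{lemmaII.1}. Your citation of Proposition \ref{propII.2} (the pointwise condition \eqref{II.20}) is in fact the more precise reference, and your argument spelling out both directions via the realizability of any $A>0$ as $Q(h_K,u)$ is exactly what the paper intends.
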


Next we further study condition
\eqref{II.25}. Let $N=n-1$. Given the
matrix $B=\diag\{b_1,\dots,b_N\}$, we write $\diag\{\hat{b_j}\}$
to denote the $(N-1)\times(N-1)$ matrix $\diag\{b_1,\dots
b_{j-1},b_{j+1},\dots,b_N\}$ obtained from $B$ by removing $b_j$
from the diagonal. We notice, that if $A\in \mat(N)$ has the eigenvalues $\lambda_1,\dots,\lambda_N$, 
then the matrix $(S_i^{kj}(A))$ has the eigenvalues 
$\frac{\partial S_i(A)}{\partial \lambda_\ell}=
S_{i-1}(\diag\{\hat{\lambda_\ell}\})$, $\ell=1,\ldots,N$ (see \cite[Proposition 1.4.1]{Salani-PhD}). 
For a fixed $u\in\sfe$, we denote by $M$ the matrix
$Q(f,u)\in \mat(N)$. By a proper choice of the coordinate system, we may assume that $M$ is diagonal, 
$M=\diag\{\mu_1,\dots,\mu_N\}$, and that $ (S_i^{kj}(A))$ is diagonal as well.  
Therefore we can restate 
condition \eqref{II.25} in the form
$$
\sum_{j=1}^N \mu_j S_{i-1}(\diag\{\hat{\lambda_j}\})=\tr((S_i^{kj}(A)) M)
\ge0 
$$
for every $A=\diag{\{\lambda_1,\dots,\lambda_N\}}>0$.

By a standard continuity argument  the latter is equivalent to
\begin{equation}\label{II.25 reform}
\sum_{j=1}^N \mu_j S_{i-1}(\diag\{\hat{\lambda_j}\})=\tr((S_i^{kj}(A)) M)
\ge0 
\end{equation}
for every $A=\diag{\{\lambda_1,\dots,\lambda_N\}}\geq 0$.

Using its equivalent form \eqref{II.25 reform},
we will prove that \eqref{II.25} for the matrix
$M$ is equivalent to condition (${\bf M}_i$) expressed by
\eqref{condition-M-i} in the introduction.

\begin{lema}\label{lemma-equiv}
Let $i\in\{1,\dots,N\}$ and let $M\in\mat(N)$. Then condition \eqref{II.25 reform} holds if
and only if
\begin{equation}\label{II.25 equiv}
\sum_{j\in J}\mu_{j}\geq 0
\end{equation}
for all  $J\subset\{1,\ldots,N\}$ of cardinality $|J|=N-i+1$.
\end{lema}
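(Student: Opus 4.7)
The plan is to turn condition \eqref{II.25 reform}, which is a polynomial inequality in the non-negative variables $\lambda_1,\ldots,\lambda_N$, into a statement about its monomial coefficients. First I would use the explicit formula
$$
S_{i-1}(\diag\{\hat\lambda_j\})=\sum_{\substack{I\subset\{1,\ldots,N\}\setminus\{j\}\\|I|=i-1}}\prod_{k\in I}\lambda_k
$$
and interchange the order of summation to obtain the identity
$$
\sum_{j=1}^{N}\mu_j\, S_{i-1}(\diag\{\hat\lambda_j\})=\sum_{\substack{I\subset\{1,\ldots,N\}\\|I|=i-1}}\left(\sum_{j\notin I}\mu_j\right)\prod_{k\in I}\lambda_k.
$$
Since the complement of such an index set $I$ has cardinality exactly $N-i+1$, the coefficients appearing on the right-hand side are precisely the sums $\sum_{j\in J}\mu_j$ with $|J|=N-i+1$.

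For the direction \eqref{II.25 equiv}$\Rightarrow$\eqref{II.25 reform}, I would argue that under \eqref{II.25 equiv} each coefficient in the identity above is non-negative, while for $\lambda_1,\ldots,\lambda_N\ge 0$ every monomial $\prod_{k\in I}\lambda_k$ is non-negative; hence the whole sum is non-negative.

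For the converse, given any $J\subset\{1,\ldots,N\}$ with $|J|=N-i+1$, I would test \eqref{II.25 reform} with the specific non-negative choice $\lambda_k=0$ for $k\in J$ and $\lambda_k=1$ for $k\notin J$. Since $|J^c|=i-1$ exactly matches the size of the index sets $I$ appearing in the expansion, the monomial $\prod_{k\in I}\lambda_k$ vanishes for every $I\neq J^c$ and equals $1$ for $I=J^c$. The identity therefore collapses to $\sum_{j\in J}\mu_j\ge 0$, which is the desired condition \eqref{II.25 equiv}.

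There is essentially no analytic obstacle here; the entire argument reduces to the combinatorial identity above together with the elementary principle that a polynomial whose monomial expansion on the positive orthant has a fixed sign pattern of coefficients attains that sign at every point of the orthant if and only if the coefficients themselves do. The only bookkeeping care is in handling complements so that the cardinalities match correctly ($|I|=i-1$ corresponds to $|I^c|=N-i+1$), which is exactly the relation that matches condition $({\bf M})_i$ in the introduction (with $N=n-1$ and $n-i=N-i+1$).
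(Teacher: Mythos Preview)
Your proof is correct and follows essentially the same approach as the paper: both derive the identity
\[
\sum_{j=1}^{N}\mu_j\, S_{i-1}(\diag\{\hat\lambda_j\})=\sum_{|I|=i-1}\Bigl(\sum_{j\notin I}\mu_j\Bigr)\prod_{k\in I}\lambda_k
\]
to deduce \eqref{II.25 reform} from \eqref{II.25 equiv}, and both establish the converse by testing \eqref{II.25 reform} at diagonal matrices with $i-1$ ones and $N-i+1$ zeros. The only cosmetic difference is that you invoke the identity explicitly for the converse, whereas the paper plugs the test values directly into the original expression.
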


\begin{proof}
It is straightforward to prove that condition \eqref{II.25 reform} implies
\eqref{II.25 equiv} by evaluating the inequality for positive
semidefinite matrices $A=\diag\{\lambda_1,\dots,\lambda_N\}$ with
$\lambda_k\in \{0,1 \}$ for $k=1,\dots,N$. Indeed, it is enough to
consider all such matrices where $N-i+1$ elements in the diagonal
vanish and the remaining $i-1$ entries are equal to one.

For the converse, observe that
\begin{align*}
\sum_{j=1}^N \mu_j S_{i-1}(\diag{\{\hat{\lambda_j}\}})
&=\sum_{j=1}^N \mu_j \sum_{|I|=i-1}\left[\left(\prod_{i\in I}\lambda_i\right)\mathbf{1}_{\{j\notin I\}}\right]\\
&=\sum_{|I|=i-1}\sum_{j=1}^N \mu_j \mathbf{1}_{\{j\notin I\}}\left(\prod_{i\in I}\lambda_i\right)\\
&=\sum_{|I|=i-1} \left(\prod_{i\in I}\lambda_i\right)\sum_{j=1}^N\mu_j \mathbf{1}_{\{j\notin I\}}\\
&=\sum_{|I|=i-1} \left(\prod_{i\in I}\lambda_i\right)\sum_{{j\notin I}}\mu_j .
\end{align*}
Since $\lambda_k\geq 0$ for all $1\leq k\leq N$, using
\eqref{II.25 equiv} we obtain \eqref{II.25 reform}.
\end{proof}

The above lemma and Proposition \ref{propII.3} provide the proof of Theorem \ref{thm-mon-1}.

\medskip

Next we proceed to prove Theorem \ref{thm-mon-2} with the help of the regularization procedure
presented in the previous section.

\bigskip

\noindent{\it Proof of Theorem \ref{thm-mon-2}.}
We assume that the functional $\bf F$ is defined
as in \eqref{intro.1} with $f\in C(\sfe)$ and that it is monotonic. 
Then, for every $k\in\N$, let $f_k$ be defined by \eqref{f_k mollification} as in Lemma \ref{l:regularization}
and let ${\bf F}_k$ be the functional given by \eqref{intro.1} with $f$
replaced by $f_k$. Then ${\bf F}_k$ is monotonic as well. Indeed, let $K$ and
$L$  be convex bodies of class $C^2_+$ with support functions $h_K$ and $h_L$,
respectively, and assume that $K\subset L$. Then
\begin{eqnarray*}
&&{\bf F}_k(K)-{\bf F}_k(L)\\
&&=\int_{\sfe} f_k(u)(S_i(Q(h_K,u))-S_i(Q(h_L,u))\,\H^{n-1}(du)\\
&&=\int_{{\bf O}(n)}\omega_k(\rho)\int_{\sfe} f(\rho u)(S_i(Q(h_K,u))-S_i(Q(h_L,u))\,\H^{n-1}(du)\,\nu(d\rho).
\end{eqnarray*}
Now, for each $\rho\in{\bf O}(n)$, we have
\begin{eqnarray*}
&&\int_{\sfe} f(\rho u)(S_i(Q(h_K,u))-S_i(Q(h_L,u))\,\H^{n-1}(du)\\
&&=\int_{\sfe} f(u)(S_i(Q(h_K,\rho^{-1}u))-S_i(Q(h_L,\rho^{-1}u))\,\H^{n-1}(du)\\
&&=\int_{\sfe} f(u)(S_i(Q(h_{\rho K},u))-S_i(Q(h_{\rho L},u))\,\H^{n-1}(du)\\
&&=F(\rho K)-F(\rho L)\le 0,
\end{eqnarray*}
where in the last inequality we have used $\rho K\subset\rho L$ and the monotonicity of ${\bf F}$. 

This proves that
${\bf F_k}$ is monotone for every $k\in\N$. Since $f_k$ is of class $C^2(\sfe)$, it 
satisfies condition $({\bf M})_i$ by Theorem \ref{thm-mon-1}, and this concludes the proof of Theorem \ref{thm-mon-2}.
\begin{flushright}
$\square$
\end{flushright}

\begin{remark}{\rm 
In the introduction, we already pointed out the meaning of condition $({\bf M})_i$ 
 in the special cases $i=n-1$ and $i=1$. Let us consider the case
$i=2$. It can be proved that for every $A$ and
$B$ in $\mat(n-1)$ we have
$$
\tr(S_2^{kj}(A)\cdot B)= \tr(S_2^{kj}(B)\cdot A).
$$
Hence, if $f\in C^2(\sfe)$, condition \eqref{II.25} becomes
$$
\tr(A(S_2^{kj}(Q(f,u))))\ge0\quad\mbox{for every $A\in\mat(n-1)$,
$A>0$,}
$$
for every $u\in\sfe$. This is equivalent to the condition
$(S_2^{kj}(Q(f,u)))\ge0$ for every $u\in\sfe$.}
\end{remark}

\section{Conditions for concavity}\label{sec-BM}

This section is devoted to the proof of Theorem \ref{thm-BM} and some of its extensions.
We consider a functional $\bf F$ of the form
\eqref{intro.1}, and we assume that $\bf F$ is non-negative on
$\K^n$ and  satisfies the Brunn-Minkowski inequality
\begin{equation}\label{III.5}
{\bf F}((1-t)K+tL)^{1/i}\ge (1-t){\bf F}(K)^{1/i}+t{\bf
F}(L)^{1/i}
\end{equation}
for all $ K,L\in\K^n$ and $t\in[0,1]$. 
As noted in the introduction, if $i=1$ then ${\bf F}$ is linear
with respect to Minkowski addition and \eqref{III.5} is satisfied (with equality)
for every $f$. Moreover, the case $i=n-1$ has been settled in
\cite{Colesanti-Hug-Saorin}. Hence we will consider the cases where $2\le i\le n-2$ in the following.

\medskip

\noindent{\em Proof of Theorem \ref{thm-BM}.}
As a first step towards the proof, we show that if ${\bf F}$ is not
identically zero, then ${\bf F}(K)>0$ for every $K\in C^2_+$. Indeed,
as $C^2_+$ bodies are dense in $\K^n$ and $\bf F$ is continuous,
there exists at least one of them, denoted by $K_0$, such that
${\bf F}(K_0)>0$. On the other hand, for any other $K\in C^2_+$, a
suitable rescaled version of $K_0$ is a {\em summand} of $K$, i.e.,
there exists $K'\in\K^n$ and $\lambda\in(0,1)$ such that
$K=(1-\lambda)K'+\lambda K_0$ (see \cite[Corollary 3.2.13]{Schneider}).
From \eqref{III.5} it follows immediately that
${\bf F}(K)\ge\lambda^i{\bf F}(K_0)>0$. On the other hand, 
if ${\bf F}$ is identically zero, then, in particular, it is monotonic so
that condition $({\bf M})_i$ holds (cf. Theorem \ref{thm-mon-1}). From now on we will assume that
${\bf F}$ is strictly positive for $C^2_+$ convex bodies.

\medskip

Consider $K\in\K^n$ of class $C^2_+$ and denote by $h$ its support function, then
$h\in{\mathfrak C}$. For $\phi\in C^\infty(\sfe)$, let $\epsilon>0$ be such that
$$
h_s:=h+s\phi\in\cf\quad\mbox{for every $s$ such that
$|s|\le\epsilon$.}
$$
For $s\in[-\epsilon,\epsilon]$ let $K_s\in{\mathfrak C}$ be such that
$h_s=h_{K_s}$. We compute the first and second derivatives
of $H(s):={\bf F}(K_s)$ at $s=0$. In fact,  we already saw in
\eqref{II.5} that
$$
 H'(s)= \int_{\sfe}f S_i^{kj}(Q(h_s))
q_{kj}(\phi)\,d\H^{n-1}
$$
(recall that we use the convention that we sum over repeated
indices). As $f\in C^2(\sfe)$, applying Lemma \ref{Cheng-Yau} to the last equality we get
\begin{equation*}
 H'(s)= \int_{\sfe}\phi S_i^{kj}(Q(h_s))
q_{kj}(f)\,d\H^{n-1}.
\end{equation*}
Differentiating once more with respect to $s$ (at $s=0$) and using
the notation introduced in Section \ref{subsec-esm}, we obtain
$$
 H''(0)= \int_{\sfe}\phi
S_i^{kj,rs}(Q(h)) q_{kj}(f) q_{rs}(\phi)\,d\H^{n-1}.
$$
Since $K_{(1-\lambda)s+\lambda s'}=(1-\lambda)K_s+\lambda K_{s'}$, 
 inequality \eqref{III.5} yields that the function $s\mapsto
G(s):=H(s)^{1/i}$ is concave for $s\in[-\epsilon,\epsilon]$.
Since ${\bf F} (K)>0$, $H(0)>0$ and $G$ is twice
differentiable at $s=0$, we  conclude that $G''(0)\le 0$, and hence
$$
 H(0)H''(0)-\frac{i-1}{i}H'(0)^2\le0.
$$
This implies
\begin{align}\label{III.15}
&{\bf F}(K)\cdot\ \int_{\sfe}\phi S_i^{kj,rs}(Q(h)) q_{kj}(f)
q_{rs}(\phi)\, d\H^{n-1} \nonumber\\
&\qquad\qquad \le\frac{i-1}{i}\left(\int_{\sfe}\phi
S_i^{kj}(Q(h)) q_{kj}(f)\, d\H^{n-1}\right)^2
\end{align}
for every $h\in\cf$ and $\phi\in C^2(\sfe)$.
For brevity, we set
\begin{equation}\label{III.20-}
M=(m_{rs}(u))_{r,s=1,\dots,n-1}:=
{\bf F}(K)\cdot\left( S_i^{kj,rs}(Q(h,u)) q_{kj}(f,u)
\right)_{r,s=1,\dots,n-1}
\end{equation}
for $ u\in\sfe$. 
Integrating by parts and using Lemma \ref{Cheng-Yau-ext}, we  rewrite \eqref{III.15} in the form
\begin{equation}\label{III.15bis}
\int_{\sfe}\phi^2{\rm trace}(M)\,d\H^{n-1}\le
\int_{\sfe}m_{rs}\phi_r\phi_s\,d\H^{n-1}+
\left(\int_{\sfe}\phi g\,d\H^{n-1}\right)^2,
\end{equation}
for every $\phi\in C^\infty(\sfe)$, where
\begin{equation}\label{III.21}
g(u)=\sqrt{\frac{i-1}{i}}S_i^{kj}(Q(h,u)) q_{kj}(f,u)\,, \quad u\in\sfe.
\end{equation}
The next step, which is the crucial part of the proof,
is to show that \eqref{III.15bis} implies the pointwise matrix condition
\begin{equation}\label{III.20}
\left( S_i^{kj,rs}(Q(h,u)) q_{kj}(f,u)
\right)_{r,s=1,\dots,n-1}\ge0
\end{equation}
for all $ u\in\sfe$. 
For this, we need a result similar to Lemma 3.3 in
\cite{Colesanti-Hug-Saorin}, which is Lemma \ref{technical-lemma}
presented at the end of this proof. This result applied to
\eqref{III.15bis} immediately gives \eqref{III.20}. In particular,
as $Q(h)\ge0$ on $\sfe$, we get
\begin{equation*}
S_i^{kj,rs}(Q(h,u)) q_{kj}(f,u)q_{rs}(h,u)\ge0 
\end{equation*}
for all $u\in\sfe$. 
On the other hand, by the homogeneity of the elementary symmetric
function $S_i$ and its derivatives, we have
\begin{equation*}
S_i^{kj,rs}(Q(h)) q_{rs}(h) =c\cdot S_i^{kj}(Q(h))
\end{equation*}
for some constant $c>0$ and for every $k,j\in\{1,\dots,n-1\}$.
Hence
$$
S_i^{kj}(Q(h,u)) q_{kj}(f,u)\ge0 ,\quad  u\in\sfe,
$$
for every $h\in\cf$,
that is, condition \eqref{II.20}, which is equivalent to the monotonicity of $\bf F$ and
also to condition $({\bf M})_i$ (see Proposition \ref{propII.1} and comments below).
Hence  Theorem \ref{thm-BM} is proved.
\begin{flushright}
$\square$
\end{flushright}

\begin{lema}\label{technical-lemma}
For $r,s\in\{1,\dots,n-1\}$ and $m_{rs}\in C(\sfe)$ let $M:=(m_{rs})_{r,s=1,\dots,n-1}$ and $g\in C(\sfe)$.
If inequality \eqref{III.15bis} holds for every $\phi\in C^\infty(\sfe)$,
then $M(u)\ge0$ for every $u\in\sfe$.
\end{lema}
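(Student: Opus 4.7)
The goal is to establish that for every $u_0\in\sfe$ and every tangent vector $v\in T_{u_0}\sfe$ we have $\sum_{r,s}m_{rs}(u_0)v_rv_s\ge 0$, which together with the symmetry of $M$ will give the desired pointwise positive semi-definiteness. The strategy is to localize near an arbitrary $u_0$ and test \eqref{III.15bis} against a family of rapidly oscillating cut-off functions. Concretely, fix $u_0$ and a nonzero $v\in T_{u_0}\sfe$, choose $\psi\in C^\infty(\sfe)$ whose covariant gradient at $u_0$ equals $v$ and is nonzero throughout a small neighbourhood $U$ of $u_0$, and pick $\eta\in C^\infty(\sfe)$ nonnegative, supported in $U$, with $\eta(u_0)>0$. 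For a parameter $\lambda>0$ insert
\[
\phi_\lambda(u):=\eta(u)\sin(\lambda\psi(u))
\]
into \eqref{III.15bis} and track the dependence on $\lambda$.

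A direct computation in a local orthonormal frame gives $(\phi_\lambda)_k=\eta_k\sin(\lambda\psi)+\lambda\eta\cos(\lambda\psi)\psi_k$, so
\[
m_{rs}(\phi_\lambda)_r(\phi_\lambda)_s
=\lambda^2\eta^2\cos^2(\lambda\psi)\,m_{rs}\psi_r\psi_s+2\lambda\eta\sin(\lambda\psi)\cos(\lambda\psi)\,m_{rs}\eta_r\psi_s+\sin^2(\lambda\psi)\,m_{rs}\eta_r\eta_s .
\]
Writing $\cos^2(\lambda\psi)=\tfrac12(1+\cos(2\lambda\psi))$ and $\sin(\lambda\psi)\cos(\lambda\psi)=\tfrac12\sin(2\lambda\psi)$, the Riemann--Lebesgue lemma (applicable because $\nabla\psi\neq 0$ on $\operatorname{supp}\eta$) shows that each oscillating piece tends to zero upon integration, so
\[
\int_{\sfe}m_{rs}(\phi_\lambda)_r(\phi_\lambda)_s\,d\H^{n-1}=\frac{\lambda^2}{2}\int_{\sfe}\eta^2 m_{rs}\psi_r\psi_s\,d\H^{n-1}+o(\lambda^2).
\]
Meanwhile $\int_{\sfe}\phi_\lambda^2\,\tr M\,d\H^{n-1}=O(1)$ since $|\phi_\lambda|\le\eta$, and the nonlocal term obeys the trivial bound $|\int_{\sfe}\phi_\lambda g\,d\H^{n-1}|\le\|g\|_\infty\|\eta\|_{L^1}$, so $(\int_{\sfe}\phi_\lambda g)^2=O(1)$.

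Dividing \eqref{III.15bis} by $\lambda^2$ and letting $\lambda\to\infty$ yields
\[
0\le \int_{\sfe}\eta^2(u)\,m_{rs}(u)\psi_r(u)\psi_s(u)\,d\H^{n-1}(u).
\]
Now let the support of $\eta$ shrink to $\{u_0\}$ while normalising by $\int\eta^2\,d\H^{n-1}$; continuity of $m_{rs}\psi_r\psi_s$ and $\nabla_{\sfe}\psi(u_0)=v$ give $\sum_{r,s}m_{rs}(u_0)v_rv_s\ge 0$, as required. The main obstacle is the justification of the $o(\lambda^2)$ remainder, which rests on the oscillatory cancellation of $\cos(2\lambda\psi)$ and $\sin(2\lambda\psi)$ after integration: this is precisely why $\nabla\psi$ must be arranged to stay away from zero on $\operatorname{supp}\eta$. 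The argument closely parallels that of Lemma~3.3 in \cite{Colesanti-Hug-Saorin}, the only novelty being the presence of the quadratic nonlocal term $(\int \phi g\,d\H^{n-1})^2$, which is easily absorbed because it grows more slowly than $\lambda^2$.
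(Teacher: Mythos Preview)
Your argument is correct. The inequality \eqref{III.15bis} is tested against rapidly oscillating functions $\phi_\lambda=\eta\sin(\lambda\psi)$; the gradient term scales like $\lambda^2$ while the trace term and the nonlocal square stay bounded, and the oscillatory remainders are killed by Riemann--Lebesgue (valid here because the amplitude $\eta^2 m_{rs}\psi_r\psi_s$ is merely continuous but the phase $\psi$ is smooth with nonvanishing gradient on $\operatorname{supp}\eta$, so a local change of variables reduces the claim to the classical lemma). Shrinking the support of $\eta$ then localises the resulting integral inequality to a point.

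The paper reaches the same conclusion by a different but kindred construction. Instead of high-frequency trigonometric test functions, it uses the rescaled sawtooth
\[
\Phi_\epsilon(x_1,\dots,x_{n-1})=g_\epsilon(x_1)\prod_{j}G(x_j/\rho),\qquad g_\epsilon(t)=\epsilon\,g(t/\epsilon),
\]
with $g$ the $2$-periodic extension of $1-|t|$ and $G$ a Lipschitz cutoff. Here $\Phi_\epsilon\to 0$ uniformly while $|\partial_{x_1}\Phi_\epsilon|\to\prod_jG(x_j/\rho)$ and $\partial_{x_k}\Phi_\epsilon\to 0$ for $k\neq 1$ pointwise a.e.; dominated convergence then forces the gradient integral to pick out $m_{11}$ near $\bar u$, yielding a contradiction if $m_{11}(\bar u)<0$. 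Both proofs exploit the same mechanism---making the first-order term dominate the zeroth-order ones so as to isolate the quadratic form $m_{rs}v_rv_s$---but the paper's version is more elementary (no oscillatory-integral estimates, only explicit Lipschitz functions and dominated convergence), whereas your version is cleaner to write and closer to standard PDE/microlocal practice. One small remark: the paper's construction, not the oscillatory one, is what actually appears in \cite[Lemma~3.3]{Colesanti-Hug-Saorin}, so your closing sentence slightly misdescribes the parallel.
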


The proof follows the lines of that of Lemma 3.3 in \cite{Colesanti-Hug-Saorin}; we provide it for the reader's convenience.

\begin{proof}

By standard approximation \eqref{III.15bis} can be extended to every
$\phi\in C(\sfe)$ which is Lipschitz on $\sfe$ (interpreting the first derivatives of $\phi$ as
functions defined $\H^{n-1}$-a.e. on $\sfe$).

\medskip

We proceed by contradiction. Let us assume there exist $\bar u\in\s^{n-1}$ and $\bar v=(\bar v_1,\ldots,\bar v_{n-1})\in \R^{n-1}$ such that
\[
\sum_{r,s=1}^{n-1}m_{rs}(\bar u)\bar v_r\bar v_s < 0\,.
\]
Without loss of generality we may assume $\bar u=(0,\dots,1)$ and $\bar v=(1,\dots,0)$. Then
 we have 
\[
\sum_{r,s=1}^{n-1}m_{rs}(\bar u)\bar v_r\bar v_s=m_{11}(\bar u)<0\,.
\]
We identify $H:=\{x=(x_1,\dots,x_n)\in\R^n\,:\,x_n=0\}$ with $\R^{n-1}$ and, for $\rho\in(0,1)$, we set
\begin{align*}
D_\rho:&=\{(x_1,\dots,x_{n-1})\in\R^{n-1}:\abs{x_i}\le\rho\,,\,i=1,\dots,n-1\}\,,\\
\tilde D_\rho:&=\{u=(u_1,\dots,u_n)\in\sfe\,:\,u_n>0\,,\,(u_1,\dots,u_{n-1})\in D_\rho\}\,.
\end{align*}
We construct a Lipschitz function $\phi$ such that inequality
\eqref{III.15bis} fails to be true. Define first
$\bar g:[-1,1]\rightarrow \R_+ $ as $\bar g(t)=1-\abs{t}$, and
denote by $g(t)$ the periodic extension of $\bar g$ to
the whole real line. Let $\epsilon>0$ and define $g_{\epsilon}(x)=\epsilon
g(x/\epsilon)$. Notice that $g_{\epsilon}\searrow 0$ uniformly on $\R$, as
$\epsilon$ tends to $0$. Let
\[
G(t):=\left\{
\begin{array}{ll} 1, & \text{for } t\in[-1/2,1/2], \\
                          0, & \text{for } |t|\geq 1,\\
                          \text{linear extension}, & \text{otherwise.}
\end{array}\right.
\]
Then $G$ is a bounded Lipschitz function in $\R$. Let us fix
$\rho\in(0,1)$. The function
\begin{eqnarray*}
\Phi_{\epsilon}(x_1,\dots, x_{n-1})=
g_{\epsilon}(x_1)G(x_1/\rho)\dots G(x_{n-1}/\rho)\,,\quad(x_1,\dots,x_n)\in D_\rho,
\end{eqnarray*}
is a bounded Lipschitz function in $D_\rho$ and ${\rm sprt}(\Phi_{\epsilon})\subset D_\rho$.
For $k\ne1$ we have
\[
\frac{\partial\Phi_{\epsilon}}{\partial
x_k}(x_1,\dots,x_{n-1})=\frac{1}{\rho} g_{\epsilon}(x_1)
G'(x_k/\rho)\prod_{j\neq k}{G(x_j/\rho)}\,,
\]
for $\H^{n-1}$-a.e. $(x_1,\dots,x_{n-1})\in D_\rho$. 
As $G\le 1$, $\abs{G'}\le 2$ and $\abs{g_\epsilon}\le\epsilon$ in $\R$,
$$
\abs{\frac{\partial\Phi_\epsilon}{\partial x_k}}
\le\frac{2\epsilon}{\rho}\,\quad\mbox{$\H^{n-1}$-a.e. in $D_\rho$,}
$$
and then
\begin{equation}
\label{IV.0aa}
\lim_{\epsilon\searrow 0}\frac{\partial\Phi_\epsilon}{\partial x_k}=0\,,\quad
\mbox{$\H^{n-1}$-a.e. in $D_\rho$.}
\end{equation}
On the other hand, for $k=1$ and for $\H^{n-1}$-a.e. $(x_1,\dots,x_{n-1})\in D_\rho$
\[
\frac{\partial\Phi_\epsilon}{\partial x_1}(x_1,\dots,x_{n-1})=
\frac{1}{\rho}g_\epsilon(x_1)G'(x_1/\rho)\prod_{j>
1}{G(x_j/\rho)}+g_{\epsilon}'(x_1)\prod_{j=1}^{n-1}{G(x_j/\rho)}\,.
\]
As $\abs{g'_\epsilon}=1$ holds $\H^1$-a.e. in $\R$, we get
\begin{equation}\label{IV.0b}
\abs{\frac{\partial\varPhi_\epsilon}{\partial
x_1}}(x_1,\dots,x_{n-1}) \longrightarrow\prod_{j=1}^{n-1}
G(x_j/\rho)\;\mbox{for $\H^{n-1}$-a.e. $(x_1,\dots,x_{n-1})\in D_\rho$},
\end{equation}
as $\epsilon\searrow0$. In particular, the above limit equals one $\H^{n-1}$-a.e. in $D_{\rho/2}$.
Consider the function
$$
\phi_\epsilon(u)=\phi_{\epsilon}(u_1,\dots,u_n):=\Phi_{\epsilon}(u_1,\dots,u_{n-1})\,,\quad
\,u\in\tilde D_\rho\,,
$$
and extend $\phi_\epsilon$ to be zero in the rest of the unit sphere $\sfe$. In the sequel, for
$u=(u_1,\dots,u_n)\in\tilde D_\rho$, we set $u'=(u_1,\dots,u_{n-1})\in D_\rho$. As $\rho<1$, the
support of $\phi_\epsilon$ is contained in the open hemisphere $\sfe\cap\{x_n>0\}$. We may take $\rho$
small enough such that there exists a local orthonormal frame of coordinates on $\tilde D_\rho$. Taking
covariant derivatives with respect to this frame, by (\ref{III.15bis}) we have
\[
\int_{\s^{n-1}}{\phi_{\epsilon}^2\,{\rm
trace}\left(M\right)\,d\H^{n-1}(u)}\leq
\int_{\s^{n-1}}{\sum_{j,k=1}^{n-1}(\phi_{\epsilon})_j(\phi_{\epsilon})_k\,m_{jk}\,d\H^{n-1}}+
\left(\int_{\sfe}\phi_\epsilon g\,d\H^{n-1}\right)^2.
\]
Since $\Phi_\epsilon$ converges to zero uniformly as $\epsilon\searrow0$, the same is valid for $\phi_\epsilon$,
hence
\begin{equation}
\label{IV.0c}
0\leq
\liminf_{\epsilon\searrow 0}\int_{\s^{n-1}}{\sum_{j,k=1}^{n-1}{m_{jk}(\phi_{\epsilon})_j(\phi_{\epsilon})_k}}\,d\H^{n-1}\,.
\end{equation}
The covariant derivatives of $\phi_{\epsilon}$ can be computed in
terms of partial derivatives of $\Phi$ with respect to Cartesian
coordinates on $D_\rho$; in particular, there exists a
$(n-1)\times(n-1)$ matrix $C=(c_{rs})_{r,s=1,\dots,n-1}$,
depending on $u$, with
$c_{rs}\in\c(D_\rho)$ for $r,s=1,\dots,n-1$, such that
\[
(\phi_{\epsilon})_j(u)=
\sum_{s=1}^{n-1}c_{js}(u')\frac{\partial\Phi_\epsilon}{\partial
x_s}(u')\,,\quad\mbox{for $\H^{n-1}$-a.e. $u\in\tilde D_\rho$.}
\]
We may assume that $C(\bar u')=C(0,\dots,0)$ is the identity matrix. Then, for
$\H^{n-1}$-a.e. $u\in\tilde D_\rho$, 
\[
\sum_{j,k=1}^{n-1}{m_{jk}(u)(\phi_{\epsilon})_j(u)(\phi_{\epsilon})_k(u)}=
\sum_{j,k=1}^{n-1}\sum_{r,s=1}^{n-1}m_{jk}(u)c_{js}(u')c_{kr}(u')\frac{\partial\Phi_\epsilon}{\partial
x_r}(u')\frac{\partial\Phi_\epsilon}{\partial x_s}(u')\,.
\]
This expression is bounded, by the boundedness of the partial derivatives of $\Phi_\epsilon$. Moreover, by
(\ref{IV.0aa}) and (\ref{IV.0b}),
$$
\lim_{\epsilon\searrow 0}
\sum_{j,k=1}^{n-1}{m_{jk}(u)(\phi_{\epsilon})_j(u)(\phi_{\epsilon})_k(u)}=
\prod_{s=1}^{n-1}G^2(u_s/\rho)
\sum_{j,k=1}^{n-1}m_{jk}(u)c_{j1}(u')c_{k1}(u')\,.
$$
Note that
$$
\sum_{i,j=1}^{n-1}m_{ij}(\bar u)c_{i1}(\bar u')c_{j1}(\bar u')=m_{11}(\bar u)<0\,.
$$
Consequently, we may choose $\rho$ sufficiently small so that
$$
\sum_{i,j=1}^{n-1}m_{ij}(u)c_{i1}(u')c_{j1}(u')\le c<0\,,\quad u\in\tilde D_\rho\,.
$$
Then
\begin{eqnarray*}
&&\lim_{\epsilon\searrow0}\int_{\sfe}\sum_{i,j=1}^{n-1}{m_{ij}(\phi_{\epsilon})_i(\phi_{\epsilon})_j}\, d\H^{n-1}\\
&&\qquad =
\int_{\sfe}\prod_{i=1}^{n-1}G^2(u_i/\rho)\sum_{i,j=1}^{n-1}m_{ij}(u)c_{i1}(u')c_{j1}(u')\,\H^{n-1}(du)\\
&&\qquad =\int_{\tilde D_\rho}\prod_{i=1}^{n-1}G^2(u_i/\rho)\sum_{i,j=1}^{n-1}m_{ij}(u)c_{i1}(u')c_{j1}(u')\,\H^{n-1}(du)\\
&&\qquad \le\int_{\tilde D_{\rho/2}}\sum_{i,j=1}^{n-1}m_{ij}(u)c_{i1}(u')c_{j1}(u')\,\H^{n-1}(du)\\
&&\qquad \le c\, \H^{n-1}(\tilde D_{\rho/2})<0\,,
\end{eqnarray*}
which contradicts (\ref{IV.0c}).
\end{proof}

\noindent
{\em Proof of Theorem \ref{thm-BM-sym}.}
Proceeding as in the first part of the proof of Theorem \ref{thm-BM} (but without integration by parts), we arrive at
the following inequality (see \eqref{III.15})
\begin{eqnarray}\label{III.30}
&&{\bf F}(K)\cdot\ \int_{\sfe}f S_i^{kj,rs}(Q(h)) q_{kj}(\phi)
q_{rs}(\phi)\,d\H^{n-1} \nonumber\\
&&\qquad \le\frac{i-1}{i}\left(\int_{\sfe} f
S_i^{kj}(Q(h)) q_{kj}(\phi)\,d\H^{n-1}\right)^2,
\end{eqnarray}
for every $h\in\cf$ and every $\phi\in C^\infty(\sfe)$.  If, in particular, $h$ is even
(and hence is the support function of a centrally symmetric convex body) and $\phi$ is odd, then the right hand-side
of \eqref{III.30} vanishes (as $f$ is even), being the integral of an odd function on $\sfe$.
We may assume, as in the previous proof, that ${\bf F}(K)>0$ for every $K\in C^2_+$. Hence 
we get
\begin{equation}\label{III.30b}
\int_{\sfe}f S_i^{kj,rs}(Q(h)) q_{kj}(\phi)
q_{rs}(\phi)\,d\H^{n-1}\le0
\end{equation}
for every $h\in\cf$ even and $\phi\in C^\infty(\sfe)$ odd. Let $\psi\in C^\infty(\sfe)$ be such that its support is contained in a
open hemisphere $H^+$, and let $\phi\,:\,\sfe\to\R$ be defined by
$$
\bar\phi(u)=
\begin{cases}
\psi(u),& \text{if } u\in H^+,\\
-\psi(-u),& \text{if } u\in\sfe\setminus H^+.
\end{cases}
$$
Then $\bar\phi\in C^\infty(\sfe)$  is an odd function. For this choice of $\bar\phi$ in
\eqref{III.30b}, in view of the symmetry of $f$ and $h$, we get
\begin{equation}\label{III.30c}
\int_{\sfe}f S_i^{kj,rs}(Q(h)) q_{kj}(\psi)
q_{rs}(\psi)\,d\H^{n-1}\le0,
\end{equation}
for every $\psi\in C^\infty(\sfe)$ with support contained in an open hemisphere. We can now apply to $f$
the regularization procedure indicated in Section \ref{sec-preliminaries} in a similar way as in the proof
of Theorem \ref{thm-mon-2}. As the left hand-side of
\eqref{III.30c} is linear with respect to $f$, we obtain that $f_k$ satisfies \eqref{III.30c} as well,
for every $k\in\N$.  Note that the functions on which the proof of Lemma \ref{technical-lemma}
is based are all supported in an open hemisphere. Hence we can apply this lemma to $f_k$
and conclude that it satisfies condition \eqref{II.20} for every $h\in\cf$ even. By Lemma \ref{lemmaII.1}
(note that the proof of this lemma requires the use of even functions only) we have that
condition \eqref{II.25} holds and then, via Lemma \ref{lemma-equiv}, condition $({\bf M})_i$ holds as well.
\begin{flushright}$\square$
\end{flushright}

\bigskip

We conclude this section with the following variant of Theorem \ref{thm-BM} in which the regularity
assumption on $f$ is weakened, and the symmetry hypothesis appearing in \ref{thm-BM-sym} is replaced by the assumption that $f$ belongs to $W^{2,1}(\sfe)$,
the Sobolev space of functions in $L^1(\sfe)$ having second weak derivatives in $L^1(\sfe)$.

\begin{theorem}\label{thm-BM-Sobolev} Let $i\in\{1,\dots,n-1\}$, let $f\in W^{2,1}(\sfe)$ be continuous, and let
${\bf F}$ be defined as in \eqref{intro.1}. If $\bf F$ is non-negative and satisfies inequality \eqref{BM}, 
then there exists a sequence $f_k\in C^2(\sfe)$,
$k\in\N$, which converges uniformly  on $\sfe$ to $f$ such that $f_k$ satisfies condition $({\bf M})_i$ for every $k\in\N$.
In particular, $\bf F$ is monotonic.
\end{theorem}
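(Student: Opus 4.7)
The strategy is to derive an almost-everywhere pointwise condition on $Q(f,u)$ from the Brunn--Minkowski hypothesis, exploiting the $W^{2,1}$ regularity to push the integration by parts of Theorem \ref{thm-BM} through in a distributional sense, and then to observe that this pointwise condition is preserved by the spherical mollification $f_k$ of $f$.

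First, following the opening of the proof of Theorem \ref{thm-BM}, one may assume ${\bf F}(K)>0$ for every $K\in C^{2}_+$. The concavity of $s\mapsto {\bf F}(K_s)^{1/i}$ at $s=0$ then yields inequality \eqref{III.30} for every such $K$ with support function $h$ and every $\phi\in C^{\infty}(\sfe)$; so far no derivatives of $f$ have been used. Since $f\in W^{2,1}(\sfe)$, the weak covariant derivatives are defined and $q_{kj}(f)\in L^{1}(\sfe)$; approximating $f$ in $W^{2,1}$ by $C^{2}$ functions and passing to the limit in Remark \ref{rem-int-by-parts} (which encodes Lemmas \ref{Cheng-Yau} and \ref{Cheng-Yau-ext}), one can move the derivatives in \eqref{III.30} from $\phi$ onto $f$. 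Repeating the manipulations in the proof of Theorem \ref{thm-BM}, this leads exactly to inequality \eqref{III.15bis}, now with the $L^{1}$ coefficients
\[
m_{rs}={\bf F}(K)\,S_i^{kj,rs}(Q(h))\,q_{kj}(f),\qquad g=\sqrt{(i-1)/i}\,S_i^{kj}(Q(h))\,q_{kj}(f).
\]

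The next, central step is a Lebesgue-point version of Lemma \ref{technical-lemma}. The concentrated test functions constructed in the proof of that lemma require only two features of the matrix $M$ near a bad point $\bar u$: a direction $\bar v$ with $\bar v^{T}M(\bar u)\bar v<0$, and a neighborhood of $\bar u$ on which $\bar v^{T}M(u)\bar v$ stays bounded above by a negative constant. In the continuous case both follow from continuity; in the $L^{1}$ case both can be secured, in an integrated sense, at any Lebesgue point where $M$ fails to be positive semidefinite, via the Lebesgue differentiation theorem. The same concentration argument then contradicts \eqref{III.15bis} whenever the set of such bad Lebesgue points has positive measure, so $M(u)\geq 0$ for a.e.~$u\in\sfe$; since ${\bf F}(K)>0$ we obtain
\[
\bigl(S_i^{kj,rs}(Q(h,u))\,q_{kj}(f,u)\bigr)_{r,s}\geq 0\qquad\text{for a.e. }u\in\sfe.
\]
Contracting with $q_{rs}(h)$ and using the homogeneity of $S_i$ yields $S_i^{kj}(Q(h,u))\,q_{kj}(f,u)\geq 0$ a.e.; letting $h$ range over a countable dense subset of $\cf$ and then invoking continuity in $h$ produces a single null set outside of which the inequality holds for every $h\in\cf$. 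Lemma \ref{lemmaII.1} together with Lemma \ref{lemma-equiv} then shows that $Q(f,u)$ lies in the closed convex cone of symmetric matrices satisfying $({\bf M})_i$ at almost every $u$.

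Finally, the mollification closes the argument. Let $f_{k}$ be the $C^{\infty}$ function defined in \eqref{f_k mollification}, which converges to $f$ uniformly on $\sfe$ by Lemma \ref{l:regularization}. Working with the $1$-homogeneous extension $\bar f_{k}(x)=\int_{\mathbf{O}(n)} \omega_{k}(\rho)\bar f(\rho x)\,\nu(d\rho)$, a direct computation gives
\[
Q(f_{k},u)=\int_{\mathbf{O}(n)}\omega_{k}(\rho)\,\rho|_{u^{\perp}}^{\,*}Q(f,\rho u)\,\nu(d\rho),
\]
where $\rho|_{u^{\perp}}^{\,*}$ denotes the pullback by the tangent isometry $\rho|_{u^{\perp}}\colon u^{\perp}\to(\rho u)^{\perp}$. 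Condition $({\bf M})_i$ defines a closed convex cone of symmetric matrices invariant under orthogonal conjugation (it is phrased purely in terms of eigenvalues), and for each fixed $u$ the push-forward of $\nu$ under $\rho\mapsto\rho u$ is the normalized surface measure on $\sfe$; hence $Q(f,\rho u)$ satisfies $({\bf M})_i$ for $\nu$-a.e.~$\rho$. Since $\omega_{k}\,d\nu$ is a probability measure and the integrand lies in the cone almost surely, $Q(f_{k},u)$ satisfies $({\bf M})_i$ for every $u\in\sfe$. Thus $f_{k}\in C^{\infty}(\sfe)$ satisfies $({\bf M})_{i}$ pointwise, $f_{k}\to f$ uniformly, and the monotonicity of ${\bf F}$ follows from Theorem \ref{thm-mon-2}. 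The main technical obstacle is the Lebesgue-point adaptation of Lemma \ref{technical-lemma} described above; the other ingredients are essentially routine extensions of arguments already in the paper.
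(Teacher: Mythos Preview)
Your approach is valid but takes a genuinely different route from the paper's. You first push through a Lebesgue-point version of Lemma \ref{technical-lemma} to conclude that $Q(f,u)$ satisfies $({\bf M})_i$ for a.e.\ $u$, and then observe that mollification---being a convex average of orthogonal conjugates---preserves this closed cone condition pointwise. The paper instead avoids the $L^1$ adaptation of Lemma \ref{technical-lemma} altogether: starting from \eqref{III.15bis} with $L^1$ coefficients, it applies the inequality to $\rho K$ and $\phi_\epsilon\circ\rho^{-1}$, multiplies by $\omega_l(\rho)$, and integrates over $\mathbf{O}(n)$. Rotation invariance then replaces $q_{kj}(f,\cdot)$ on the right-hand side by $q_{kj}(f_l,\cdot)$, while the two terms on the left-hand side are bounded by constants times $\|\phi_\epsilon\|_{L^\infty}^2$ and hence vanish as $\epsilon\searrow 0$. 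One is left with exactly the $\liminf$ inequality from the proof of Lemma \ref{technical-lemma}, but now for the smooth function $f_l$, so that lemma applies verbatim and the remainder follows as in Theorem \ref{thm-BM}. In short, the paper mollifies the \emph{inequality} rather than the \emph{conclusion}, trading your Lebesgue-point lemma for an elementary rotation-averaging step. Your route yields the extra information that $Q(f,\cdot)$ itself lies in the $({\bf M})_i$ cone almost everywhere, which the paper never states; on the other hand, the obstacle you flag as ``the main technical obstacle'' is real, and the paper's device sidesteps it entirely.
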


\begin{proof} As in the proof of Theorem \ref{thm-BM}, the validity of Brunn-Minkowski inequality \eqref{BM} implies \eqref{III.15bis},
where $M$ and $g$, defined by \eqref{III.20-} and \eqref{III.21} respectively, involve weak second derivatives of $f$. 
Moreover, we can assume that $F$ is positive on convex bodies of class $C^2_+$. 
Then inequality \eqref{III.15bis} can be restated in the form
\begin{eqnarray}\label{IV.3}
&&\int_{\s^{n-1}}
\phi(u)^2\,{\rm trace}\left(S_i^{kj,rs}(Q(h,u))q_{kj}(f,u)\right)\, \H^{n-1}(du)\nonumber\\
&&\qquad\qquad\qquad\qquad-
\frac{i-1}{iF(K)}\left(\int_{\s^{n-1}}\phi(u) S_i^{kj}(Q(h,u))q_{kj}(f,u)\, \H^{n-1}(du)\right)^2\nonumber\\
&&\qquad\qquad\le \int_{\s^{n-1}}\sum_{i,j=1}^{n-1}S_i^{kj,rs}(Q(h,u))q_{kj}(f,u)\phi_i(u)\phi_j(u)\, \H^{n-1}(du),
\end{eqnarray}
for all $K\in\mathcal{K}^n$ of class $C^2_+$ with support function $h$ and all $\phi\in C^\infty(\s^{n-1})$. 
For $\rho\in \mathbf{O}(n)$, $\rho K$ is also of class $C^2_+$ and its support function is $h\circ \rho^{-1}$. 
Applying now \eqref{IV.3} with $\rho K$, $h\circ\rho^{-1}$, and $\phi=\phi_\epsilon\circ\rho^{-1}$, where $\phi_\epsilon$ 
is as in the proof of Lemma \ref{technical-lemma}. Multiplying both sides of \eqref{IV.3} with the mollifier $\omega_l$, $l\in\N$, 
 integrating over the rotation group and using the rotation invariance of Hausdorff measures, we get on the right-hand side 
\begin{eqnarray*}
&&\int_{\mathbf{O}(n)}\omega_l(\rho)\int_{\s^{n-1}}\sum_{i,j=1}^{n-1}S_i^{kj,rs}(Q(h\circ\rho^{-1},\rho u))q_{kj}(f,\rho u)\\
&&\qquad\qquad\qquad\qquad\qquad\times ((\phi_\epsilon)\circ\rho^{-1})_i(\rho u)
((\phi_\epsilon)\circ\rho^{-1})_j(\rho u)\, \H^{n-1}(du)\, \nu(d\rho)\\
&&\qquad=\int_{\mathbf{O}(n)}\int_{\s^{n-1}}\sum_{i,j=1}^{n-1}S_i^{kj,rs}(Q(h,v))\omega_l(\rho)q_{kj}(f_\rho,v)(\phi_\epsilon)_i(v)(\phi_\epsilon)_j(v)
 \H^{n-1}(dv)\, \nu(d\rho)\\
&&\qquad=\int_{\s^{n-1}}\sum_{i,j=1}^{n-1}S_i^{kj,rs}(Q(h,v)) q_{kj}(f_l,v)(\phi_\epsilon)_i(v)(\phi_\epsilon)_j(v)
 \H^{n-1}(dv).
\end{eqnarray*}
On the other hand, we can bound the resulting two integrals on the left-hand side by
\begin{align*}
&\left|\int_{\mathbf{O}(n)}\omega_l(\rho)\int_{\s^{n-1}}
((\phi_\epsilon)\circ\rho^{-1})(u))^2\,{\rm trace}\left(S_i^{kj,rs}(Q(h\circ\rho^{-1},u))q_{kj}(f,u)\right)\, \H^{n-1}(du)\,\nu(d\rho)\right|\\
&\qquad\le \int_{\mathbf{O}(n)}\omega_l(\rho)\|\phi_\epsilon\|^2_{L^\infty(\s^{n-1})} c_1(h)\|f\|_{W^{1,2}(\s^{n-1})}\, \nu(d\rho)\\
&\qquad\le c_2(h,f)\|\phi_\epsilon\|^2_{L^\infty(\s^{n-1})}
\end{align*}
and
\begin{align*}
&\left|\int_{\mathbf{O}(n)}\omega_l(\rho)
\frac{i-1}{iF(\rho K)}\left(\int_{\s^{n-1}}(\phi_\epsilon)\circ\rho^{-1})(u) S_i^{kj}(Q(h\circ\rho^{-1},u))q_{kj}(f,u)\, \H^{n-1}(du)\right)^2\, \nu(d\rho)\right|\\
&\qquad\le \int_{\mathbf{O}(n)}\omega_l(\rho)\left(\min\{F(\rho K):\rho\in \mathbf{O}(n)\}\right)^{-1}\|\phi_\epsilon\|^2_{L^\infty(\s^{n-1})}c_3(h)
\|f\|_{W^{1,2}(\s^{n-1})}^2\, \nu(d\rho)\\
&\qquad\le c_4(h,f)\|\phi_\epsilon\|^2_{L^\infty(\s^{n-1})}.
\end{align*}
The constants $c_1,\ldots,c_4$ depend only on the parameters indicated in brackets. Here we use that the minimum $\min\{F(\rho K):\rho\in \mathbf{O}(n)\}$ is  positive and depends only on $f$ and $K$, since $\rho\mapsto F(\rho K)$ is continuous and positive. 
From $\|\phi_\epsilon\|_{L^\infty(\s^{n-1})}\to 0$ as $\epsilon\searrow 0$, we now deduce that
$$
\liminf_{\epsilon\searrow 0}\int_{\s^{n-1}}
\sum_{i,j=1}^{n-1}S_i^{kj,rs}(Q(h,v)) q_{kj}(f_l,v)
(\phi_\epsilon)_i(v)(\phi_\epsilon)_j(v)
\,\H^{n-1}(dv)\ge 0
$$
for all $l\in\N$. Since $f_l\in C^\infty(\s^{n-1})$, we can apply the argument used in the proof of Lemma 
 \ref{technical-lemma} to see that the matrix 
$$
\left(S_i^{kj,rs}(Q(h,v)) q_{kj}(f_l,v)\right)_{r,s=1,\ldots,n-1}
$$
is positive-semidefinite for all $l\in\N$, $h\in{\mathfrak C}$ and all $v\in\s^{n-1}$. 

From this point, we 
can proceed as in the proof of Theorem \ref{thm-BM} after \eqref{III.20}, obtaining that $f_l$ satisfies condition  $({\bf M})_i$.
\end{proof}

\section{Proof of Theorem \ref{thm-general-BM+}} 
This section contains the proof of Theorem \ref{thm-general-BM+}, and hence of Theorem \ref{thm-general-BM}, preceded by an 
auxiliary lemma. 

For the proof we proceed by induction over the dimension $n\ge 3$. The proof uses in an essential way
the special case $i=n-1$ treated in \cite{Colesanti-Hug-Saorin}. 

We start with an auxiliary lemma. For this, let  $\delta_x$ denote the Dirac measure 
with unit mass in the point $x\in\R^n$. We denote by $V^E$ the mixed volume of convex bodies
contained in $E$, defined on $\K^{\dim E}$.

\begin{lema}\label{l: mixed area measure cylinder}
Let $E$ be an $(n-1)$-dimensional linear subspace in $\R^n$ with
unit normal $u$. Let $K_1,\dots,K_{n-2}\subset E$ be convex
bodies. Let $B=B^n\cap E$, where $B^n$ is the Euclidean unit ball
in $\R^n$ and $R\in\R$, $R>0$. If $\eta\subset 
\s^{n-1}$ is an arbitrary Borel set, then
\begin{align*}
&S(K_1,\dots,K_{n-2},B+R[-e_n,e_n];\eta)\\
&\qquad= \frac{R}{n-1} S^E(K_1,\dots,K_{n-2};\eta\cap E)+
V^E(K_1,\dots,K_{n-2},B)(\delta_u+\delta_{-u})(\eta).
\end{align*}
\end{lema}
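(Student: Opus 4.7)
The plan is to exploit multilinearity of the mixed area measure in its last slot to decompose
\[
S(K_1,\dots,K_{n-2},B+R[-e_n,e_n];\cdot)=S(K_1,\dots,K_{n-2},B;\cdot)+R\,S(K_1,\dots,K_{n-2},[-e_n,e_n];\cdot),
\]
and then to identify each of the two resulting mixed area measures separately.

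For the first piece, all $n-1$ bodies $K_1,\dots,K_{n-2},B$ lie in the hyperplane $E$ with unit normal $u$; any Minkowski combination of them is therefore $(n-1)$-dimensional in $E$ and admits only $\pm u$ as outer unit normals, so $S(K_1,\dots,K_{n-2},B;\cdot)$ is concentrated on $\{u,-u\}$. To pin down the masses at $\pm u$, I would test against $h_L$ for arbitrary $L\in\K^n$ via $nV(L,K_1,\dots,K_{n-2},B)=\int h_L\,dS(K_1,\dots,K_{n-2},B;\cdot)$ and evaluate the mixed volume on the left by expanding $\mathrm{vol}_n(tL+\sum_{j=1}^{n-2}t_jK_j+t_0B)$, exploiting that $K_j,B\subset E$; this yields the cylinder-type identity
\[
V(L,K_1,\dots,K_{n-2},B)=\tfrac{1}{n}\bigl(h_L(u)+h_L(-u)\bigr)V^E(K_1,\dots,K_{n-2},B).
\]
Matching this against the atomic measure on $\{u,-u\}$ gives
\[
S(K_1,\dots,K_{n-2},B;\cdot)=V^E(K_1,\dots,K_{n-2},B)(\delta_u+\delta_{-u}).
\]

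For the segment piece I would invoke the Cauchy--Kubota-type identity for a mixed volume with a segment summand. Using translation invariance of mixed volumes together with the elementary identity $\mathrm{vol}_n(K+t[0,v])=\mathrm{vol}_n(K)+t|v|\,\mathrm{vol}_{n-1}(K|v^\perp)$, a standard coefficient comparison yields $V(L,K_1,\dots,K_{n-2},[-e_n,e_n])$ as an explicit multiple of $V^E(L|E,K_1,\dots,K_{n-2})$. Combining this with $nV(L,\dots)=\int_{\sfe}h_L\,dS(\dots)$ and the in-$E$ analogue $(n-1)V^E(L|E,\dots)=\int_{\sfe\cap E}h_L\,dS^E(\dots)$ (using $h_{L|E}=h_L$ on $\sfe\cap E$) produces an identity of the form
\[
\int_{\sfe}h_L\,dS(K_1,\dots,K_{n-2},[-e_n,e_n];\cdot)=\tfrac{c}{n-1}\int_{\sfe\cap E}h_L\,dS^E(K_1,\dots,K_{n-2};\cdot),
\]
valid for every $L\in\K^n$, with an explicit constant $c$ coming from the length of the segment. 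Since every mixed area measure has vanishing first moment $\int v\,d\mu(v)=0$, and finite signed Borel measures on $\sfe$ satisfying this constraint are uniquely determined by their integrals against support functions, the displayed identity forces $S(K_1,\dots,K_{n-2},[-e_n,e_n];\cdot)$ to coincide with the corresponding multiple of $S^E(K_1,\dots,K_{n-2};\cdot\cap E)$, viewed as a measure on $\sfe$ supported on the equator.

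Summing the two contributions, pulling the factor $R$ out of the segment term, and evaluating at $\eta$ yields the asserted identity. The main delicate point will be the uniqueness step in the segment piece, which rests on the density of differences of support functions in the subspace of $C(\sfe)$ compatible with the vanishing first-moment condition common to all mixed area measures.
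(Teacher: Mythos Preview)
Your approach is correct and essentially the same as the paper's: both split via multilinearity, identify the segment term by testing against support functions through the projection formula $nV(L,K_1,\dots,K_{n-2},[0,e_n])=V^E(L|_E,K_1,\dots,K_{n-2})$ (Schneider (5.68)), and identify the $B$-term using that area measures of bodies contained in $E$ are supported on $\{\pm u\}$. The only differences are cosmetic: the paper derives the $B$-term by polarizing the known identity $S(K[n-1];\cdot)=V^E(K)(\delta_u+\delta_{-u})$ for $K\subset E$ and leaves the uniqueness-from-support-functions step implicit, whereas you spell both points out explicitly.
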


\begin{proof}
Without loss of generality we can assume that $u=e_n$. Using the
linearity of the surface area measures, we have that
\begin{align*}
& S(K_1,\dots,K_{n-2},B+R[0,e_n];\eta)\\
&\qquad =S(K_1,\dots,K_{n-2},B;\eta)+RS(K_1,\dots,K_{n-2},[0,e_n];\eta).
\end{align*}

Let $L\in\K^n$ be an arbitrary convex body  with support function $h_L$ and
$K_1,\dots,K_{n-2}\in\K^n$. Then we have
\begin{align*}
& \int_{\s^{n-1}} h_L(u)\, S(K_1,\dots,K_{n-2},[0,e_n];du)\\
&\qquad=nV(L,K_1,\dots,K_{n-2},[0,e_n])\\
&\qquad=V^E(L|_{E},K_1|_{E},\dots,K_{n-2}|_{E})\\
&\qquad=\frac{1}{n-1}\int_{\s^{n-1}}{h(L|_{E},u)}\, S^E(K_1,\dots,K_{n-2};du)\\
&\qquad= \frac{1}{n-1}\int_{\s^{n-1}}{h(L,u)\mathbf{1}}_{E}(u)\, S^E(K_1,\dots,K_{n-2};du),
\end{align*}
where we used \cite[(5.68)]{Schneider}.
Hence we obtain that
\[S(K_1,\dots,K_{n-2},[0,e_n];\eta)=\frac{1}{n-1}S^E(K_1,\dots,K_{n-2};\eta\cap E).\]

In order to prove that
$$
S(K_1,\dots,K_{n-2},B;\eta)=V^E(K_1,\dots,K_{n-2},B)\left(\delta_{e_n}+\delta_{-e_n}\right)(\eta)
$$
we observe that for a convex body $K\subset E$, it is known (see \cite[p.~220-221]{Schneider}) that
\[S(K[n-1];\cdot)=(\delta_u(\cdot)+\delta_{-u}(\cdot))V^E(K).\]
Considering $K=\sum_{i=1}^{n-2}\alpha_iK_i+\alpha_{n-1}B$, 
using the multilinearity of area measures and mixed volumes, and then comparing corresponding coefficients of both expressions,  we obtain 
\[S(K_1,\dots,K_{n-2},B;\eta)=V^E(K_1,\dots,K_{n-2},B)(\delta_{e_n}+\delta_{-e_n})(\eta),\]
which finishes the proof.
\end{proof}

\noindent{\em Proof of Theorem \ref{thm-general-BM+}.} 
We proceed by induction on
$n\ge 3$ with $2\le i\le n-1$. The first step of the induction is the case $n=3$, and hence $i=2$. 
More generally, for $n=i+1\ge 3$, 
we know from \cite[Theorem 1.1]{Colesanti-Hug-Saorin} that the assumption implies that 
$f$ is the support function of a convex body. Notice that in this
case  the integration defining the 
functional ${\bf F}$ involves the usual surface
area measure and there are no other convex bodies.

Now we assume that the result is true for all $(n-1)$-dimensional Euclidean subspaces of $\R^n$ and $2\le i\le n-2$. We prove that
 inequality \eqref{BM-min} for the functional
\eqref{F mixed area measure} defined on $\K^n$ and with $ i\in\{2,\ldots, n-1\}$ implies that $f$ is
a support function. 
Since the case $i=n-1$ is already covered by \cite[Theorem 1.1]{Colesanti-Hug-Saorin}, we can assume that $2\le i\le n-2$. 
For this, let $f\in C(\s^{n-1})$ be such that  the functional ${\bf F}$ given in 
\eqref{F mixed area measure} satisfies \eqref{BM-min}, for all $K_1,\dots,K_{n-i-1}\in\K^n$. 

Let $E$ be any
$(n-1)$-dimensional subspace of $\R^n$. Without loss of generality we can
choose $E=\{x\in\R^n\,:\,\langle x,e_n\rangle =0\}=:e_n^{\perp}$ and identify it
with $\R^{n-1}$. Let $B=B^{n}\cap E$ 
and $R\in\R$, $R>0$. 
For $\overline{K},\overline{K}_1,\dots,\overline{K}_{n-i-2}\in\K^{n-1}$
(arbitrary) define $\bar{F}: \K^{n-1}\longrightarrow \R$ by
\[
\overline{{\bf F}}(\overline{K})=\int_{\s^{n-1}}f(x)\,S(\overline{K}[i],
\overline{K}_1,\dots,\overline{K}_{n-i-2},B+R[0,e_n];dx).
\]
We notice that as $2\le i\le n-2$, we have $n\ge 4$. 

From the assumption  \eqref{BM-min} on ${\bf F}$, it follows that $\overline{{\bf
F}}$ satisfies
\begin{equation}\label{WBM for Fbar}
\overline{{\bf F}}((1-t)\overline{K}+t\overline{L})\geq
\min\left\{\overline{{\bf F}}(\overline{K}),\overline{{\bf F}}(\overline{L})\right\} 
\end{equation}
for all $t\in[0,1]$, $\overline{K},\overline{L}\in\K^{n-1}$, and any choice of $\overline{K}_1,\dots,\overline{K}_{n-i-2}\in\K^{n-1}$. 
Lemma \ref{l: mixed area measure cylinder} shows that
\begin{align*}
\overline{{\bf F}}(\overline{K})&=  \frac{R}{n-1}\int_{\s^{n-2}}{f|_{E}(x)}\,S^E(\overline{K}[i],\overline{K}_1,\dots,\overline{K}_{n-i-2};dx) \\
 &\qquad +(f(e_n)+f(-e_n))V^E(\overline{K}[i],\overline{K}_1,\dots,\overline{K}_{n-i-2},B),
\end{align*}
and similarly for $\overline{L}$ and $(1-t)\overline{K}+t\overline{L}$. 
We plug this into \eqref{WBM for Fbar} and divide the resulting inequality by $R$. Then,  for all $\overline{K},\overline{L}\in \K^{n-1}$ and
$t\in[0,1]$, we get
\begin{align*}
&\frac{1}{R}\overline{{\bf F}}((1-t)\overline{K}+t\overline{L})\\
&\qquad=\frac{1}{n-1}\int_{\s^{n-2}}{f|_{E}(x)}\,S^E(((1-t)\overline{K}
+t\overline{L})[i],\overline{K}_1,\dots,\overline{K}_{n-i-2};dx)\\
&\qquad\qquad +\frac{1}{R}(f(e_n)+f(-e_n))V^E(((1-t)\overline{K}+t\overline{L})[i],\overline{K}_1,\dots,\overline{K}_{n-i-2},B)\\
&\qquad\geq \min\left\{\frac{1}{n-1}\int_{\s^{n-2}}{f|_{E}(x)}\,S^E(\overline{K}[i],\overline{K}_1,\dots,\overline{K}_{n-i-2};dx)\right.\\
&\qquad\qquad\qquad\qquad+\frac{1}{R}(f(e_n)+f(-e_n))V^E(\overline{K}[i],\overline{K}_1,\dots,\overline{K}_{n-i-2},B),\\
&\qquad\qquad\qquad \frac{1}{n-1}\int_{\s^{n-2}}{f|_{E}(x)}\,S^E(\overline{L}[i],\overline{K}_1,\dots,\overline{K}_{n-i-2};dx) \\
&\qquad\qquad\qquad\qquad +\left.\frac{1}{R}(f(e_n)+f(-e_n))V^E(\overline{L}[i],\overline{K}_1,\dots,\overline{K}_{n-i-2},B)\right\}.
\end{align*}

When $R$ tends to infinity, we obtain that the functional defined on $\K^{n-1}$ and given by
\[
\overline{K}\mapsto
\int_{\s^{n-2}}{f|_{E}(x)\,S^E(\overline{K}[i],\overline{K}_1,\dots,\overline{K}_{n-i-2};dx)}
\]
satisfies \eqref{BM-min}. Hence, the induction
hypothesis yields that $f|_{E}$ is a convex function in $E$. Since the
same argument works for an arbitrary subspace $E$, we conclude that
$f$ is a convex function, that is, (the homogeneous extension of) $f$ is the support function of a
convex body.
\begin{flushright}
$\square$
\end{flushright}

\end{document}